\newtheorem{theorem}{Theorem}[section]
\newtheorem{lemma}[theorem]{Lemma}
\theoremstyle{definition}
\theoremstyle{remark}
\newtheorem{remark}[theorem]{Remark}
\numberwithin{equation}{section}
\begin{document}

\title{A priori error analysis of the $hp$-mortar FEM for parabolic problems}
\author{Sanjib Kumar Acharya} 
\address{TIFR Centre for Applicable Mathematics, Bengaluru, Karnataka 560065 India} 
\email{acharya.k.sanjib@gmail.com}
\author{Ajit Patel}
\address{The LNM Institute of Information Technology,  Jaipur, Rajasthan 302031  India } 
\email{ajit.iitb@gmail.com}
\author{Talal Rahman} 
\address{Western Norway University of Applied Sciences, Department of Computing, Math\-ematics and Physics,
Inndalsveien 28, 5063 Bergen, Norway} 
\email{Talal.Rahman@hvl.no}

%




\keywords{$hp$-mortar FEM, parabolic initial boundary value problem, semidiscrete method, superconvergence, fully discrete method}

\begin{abstract}
In this article we derive a priori error estimates for the $hp$-version of the mortar finite element method for parabolic initial-boundary value problems.  Both semidiscrete
and fully discrete  methods are analysed in $L^2$- and $H^1$-norms.  The superconvergence results for the solution of the semidiscrete problem  are studied in an eqivalent negative norm,  with an extra regularity assumption. Numerical experiments are conducted to validate the theoretical findings.
\end{abstract}

\maketitle

\section{Introduction}
 Over the last two decades, mortar finite element methods have attracted
plenty of attentions due to its intriguing features like, flexibility in handling different
types of nonconformities and various complex or even unsteady geometries. It is a domain
decomposition method which allows to divide the domain of definition into several overlapping
or nonoverlapping subdomains and to choose independent discretization scheme in different subdomains. The grids are glued together by a mortar projection without disturbing the local discretization.
This method is very successful in  approximating the solution in  nonhomogeneous mediums, cf. \cite{bernardy1}.
It needs the local behavior of the exact solution of the partial differential equation which must be
approximated. Nonconformity in the method is either caused by  the choice of the matching conditions
of the discrete solution on the interfaces or by the geometrical features of the partition of the domain.
In standard mortar finite element method, the variational formulation leads to positive definite system on
the constrained mortar space, cf. \cite{bernardy}. On the other hand, a Lagrange multiplier is used to alleviate the mortaring condition and the variational formulation gives rise to an indefinite system, cf. \cite{belagacem-h}.

Convergence of the finite element methods can be achieved in three ways, by decreasing the discretization parameter $h$ (the $h$-version) or by increasing the polynomial degree $p$  (the $p$-version) or by combining the both (the $hp$-version). For the standard $h$-, $p$- and $hp$-version of the  finite element methods for elliptic and parabolic problems, we refer to \cite{babuska, babuska1, babuska-p, janik-p, ciar, thomee}. For a general discussion on $p$- and $hp$-version of the finite element method we refer to \cite{general-hp}.

Optimal error estimates for the $h$-version of the mortar finite element method with and without
Lagrange multiplier for elliptic problems are established in \cite{belagacem-h,bernardy}.
For a general discussion on mortar element methods and its application we refer to \cite{bernardy1}
and the references therein. The $h$-version of the mortar finite element method with and without
Lagrange multiplier for the parabolic initial boundary value problems is introduced  by Patel et al.
\cite{patel} in which optimal error estimates are established. We refer to \cite{overlap,overlap1} for mortar element method
with overlapping partition.

The $hp$-version of the mortar finite element methods with and without Lagrange
multiplier (with meshes satisfying a generalized condition) for elliptic problems
are introduced by Seshaiyer and Suri (cf. \cite{suri}). Wherein,  suboptimal error
estimate  with a pollution term $p^{3/4}$ has been obtained for quasiuniform meshes.
However, significant deterioration of the accuracy is not seen in the computational
results compared to the optimal rate (cf. \cite{sesh2}). Pollution in the error estimate
is caused due to the continuity constant of the non-quasiuniform mortar projection operator and
it can not be improved as it is observed  and  computationally verified by the eigenvalue
technique  (see  \cite{sesh1, suri,sesh3}). Some variants of the mortar finite element method
(M1, M2 methods) are introduced by Seshaiyer and Suri in \cite{sesh2} in which suboptimal error estimates
are established.  A new variant (MP method) is introduced by Belgacem et al. in \cite{belgacem-hp}
where they derived suboptimal error estimates. We refer to \cite{belgacem-hp,chilton} for $hp$-version of  the mortar finite element
method for fluid flow problems.

In \cite{belgacem-hp},  improved estimates are derived using an interpolating argument, which are suboptimal with a pollution term $O(p^{\epsilon})$. This technique may fail in some interesting situations (cf. \cite{sesh}).
The loss term is  reduced to $O(\sqrt{\log p})$ by Belgacem et al. in \cite{sesh1},
and quasi optimal results are established for mixed elasticity and  Stokes problems.

For problems with singularities, the finite element solution with quasiuniform mesh behaves very harshly near the singular points (cf. \cite{babuska,babuska1}). By employing geometric meshes
it is seen that the error decays exponentially even in the presence of those singularities. For the standard
$hp$ estimates with non-quasiuniform meshes, we refer to \cite{general-hp,Guo}.  Suitable meshing in
the vicinity of singular points gives an exponential decay of the error in $hp$ version of the the
mortar finite element method irrespective of the pollution term (cf. \cite{suri,sesh2}).

In the last decade, a number of articles were published concerning the error estimates for the  $hp$-version of the mortar finite element methods for elliptic problems. But till date there is hardly any article available for the parabolic problems.
In this article our aim is to establish a priori error estimates  for the parabolic problems (with quasiuniform mesh over subdomains) in $L^2$- and $H^1$-norms. There are situations where it is essential to use negative norm estimates of the solution to get superconvergence. We derive the  estimates of the mortar solution in an equivalent negative norm which give superconvergence of the method with an extra regularity assumption. To make it concise we are not dealing with the results for the nonquasiuniform mesh which can be derived similarly using the results in \cite{suri}.

The rest of the paper as follows. In Section 2, we  briefly recall function spaces to be used
in this manuscript. We formulate a parabolic initial boundary value problem in the context of mortar
finite element method in Section 3. In section 4, we state and develop some approximation properties which plays
a vital role in proving the convergence results. Error estimates for semidiscrete scheme are discussed in Section 5. We discuss the superconvergence of the method in Section 6. Fully discrete
scheme is discussed in Section 7 in which quasioptimal results are obtained.  Finally in Section 8, we give
concluding remarks.

\section{Preliminaries}
We define the space $L^{2}(0,T;Y)$  (cf. \cite{lion}) as
\begin{equation*}
L^{2}(0,T;Y)=\left\{v:[0,T]\rightarrow Y:\int_{0}^{T}{||v||}^{2}_{Y}~dt<\infty\right\}
\end{equation*}
equipped with the following norm
$${||v||}_{L^{2}(0,T;Y)}=\left(\int_{0}^{T}{||v||}^{2}_{Y}~dt\right)^{1/2},$$
where $Y$ is a Banach space and $0<T<\infty$.

Let $\Omega$ be an open bounded polygonal domain in  $\mathbb{R}^{2}$  with boundary $\partial\Omega$.
We denote $\bar{\alpha}=(\alpha_{1},\alpha_{2})$ as a $2$-tuple of non-negative integers $\alpha_{i}, \ i=1, 2,$
with $|\bar\alpha|=\alpha_{1}+\alpha_{2}$, and set
$$D^{\bar\alpha}=\frac{\partial^{|\bar\alpha|}}{\partial x_{1}^{\alpha_{1}}\partial x_{2}^{\alpha_{2}}}\cdot$$
The Sobolev space of integer order $m$, over the domain $\Omega$, cf. \cite{grisvard}, is defined as follows,
\begin{equation*}
H^{m}(\Omega)=\left\{v\in L^{2}(\Omega):D^{\bar\alpha}v\in L^{2}(\Omega), |\bar\alpha|\leq m\right\},
\end{equation*}
and is equipped with the norm and semi-norm defined as follows,
\begin{equation*}
{||v||}_{H^{m}(\Omega)}=\left(\sum_{|\bar\alpha|\leq m}\int_{\Omega}|D^{\bar\alpha}v|^{2}dx\right)^{1/2}\text{and}~~
{|v|}_{H^{m}(\Omega)}=\left(\sum_{|\bar\alpha|= m}\int_{\Omega}|D^{\bar\alpha}v|^{2}dx\right)^{1/2},
\end{equation*}
respectively. 
We define a negative norm ${||\cdot||}_{H^{-m}(\Omega)}$ by
$${||v||}_{H^{-m}(\Omega)}=\sup_{0\neq\phi\in H^{m}(\Omega)}\frac{(v,\phi)}{{||\phi||}_{H^{m}(\Omega)}},$$
where $v\in L^{2}(\Omega)$ and $(\cdot,\cdot)$ denotes the usual inner product in $L^{2}(\Omega)$.

Let $\nu=m+\sigma$ be a positive real number, where $m$  and  $\sigma\in(0,1)$ are  the integral part
and fractional part of $\nu$, respectively. The fractional order Sobolev space $H^{\nu}(\Omega)$ is defined as
\begin{equation*}
H^{\nu}(\Omega)=\bigg\{v\in H^{m}(\Omega):\int_{\Omega}\int_{\Omega}\frac{\big(D^{\bar\alpha}v(x)
-D^{\bar\alpha}v(y)\big)^{2}}{|x-y|^{2+2\sigma}}dxdy<\infty,~|\bar\alpha|=m\bigg\}
\end{equation*}
with the norm
\begin{equation}\nonumber
{||v||}_{H^{\nu}(\Omega)}=\bigg({||v||}^{2}_{H^{m}(\Omega)}
+\sum_{|\bar\alpha|=m}\int_{\Omega}\int_{\Omega}\frac{\big(D^{\bar\alpha}v(x)
-D^{\bar\alpha}v(y)\big)^{2}}{|x-y|^{2+2\sigma}}dxdy\bigg)^{1/2}.
\end{equation}

We shall denote by $H^{\nu-1/2}(\partial\Omega)$ the space of traces ${v|}_{\partial\Omega}$
over $\partial\Omega$ of the functions $v\in H^{\nu}(\Omega)$ equipped with the norm
$${||g||}_{H^{\nu-1/2}(\partial\Omega)}=\inf_{v\in H^{\nu}(\Omega),v{|}_{\partial\Omega}
=g}{||v||}_{H^{\nu}(\Omega)}$$ and $$H^{1}_{0}(\Omega)=\{v\in H^{1}(\Omega):v|_{\partial\Omega}=0\}.$$

For $\sigma\in(0,1)$ let $H^{-\sigma}(\partial\Omega)$ be the dual space of $H^{\sigma}(\partial\Omega)$,
equipped with the norm $${||\mu||}_{H^{-\sigma}(\partial\Omega)}=\sup_{g\in H^{\sigma}(\partial\Omega),~g\neq 0}\frac{\langle\mu,g\rangle_{\sigma,\partial\Omega}}{||g||_{H^{\sigma}(\partial\Omega)}},$$
where $\langle\cdot,\cdot\rangle_{\sigma,\partial\Omega}$ is the duality pairing between $H^{-\sigma}(\partial\Omega)$
and $H^{\sigma}(\partial\Omega)$.

 Let $\gamma$ be a part of $\partial\Omega$ and we define $H^{1/2}_{00}(\gamma)$ as an interpolation
space (cf. \cite{lof}) in between $L^{2}(\gamma)$ and $H^{1}_{0}(\gamma)$ that is,
\begin{equation}
H^{1/2}_{00}(\gamma)=[L^{2}(\gamma),H^{1}_{0}(\gamma)]_{1/2}
\end{equation}
endowed with the norm
\begin{equation}
{||g||}_{H^{1/2}_{00}(\gamma)}=\inf_{v\in H^{1}_{0}(\gamma),~v|_{\gamma}=g}{||v||}_{H^{1}(\Omega)}.
\end{equation}
We denote the dual space of $H^{1/2}_{00}(\gamma)$  by $H^{-1/2}_{00}(\gamma)$ together  with the norm
$${||\mu||}_{H_{00}^{-1/2}(\gamma)}=\sup_{g\in H^{1/2}_{00}(\gamma),~g\neq 0}\frac{\langle\mu,g\rangle_{00,\gamma}}
{||g||_{H_{00}^{1/2}(\gamma)}},$$
where $\langle\cdot,\cdot\rangle_{00,\gamma}$ denotes the duality paring between $H^{-1/2}_{00}(\gamma)$ and
$H^{1/2}_{00}(\gamma)$.
\section{Problem formulation and mortar finite element method}
Consider a second order parabolic initial-boundary value problem:
\begin{align}
\dot{u}(x,t)-\nabla\cdot(\alpha(x)\nabla &u(x,t)) =f(x,t)\hspace{0.3cm}\text{in}~\Omega\times (0,T],\label{h1}\\
&u(x,t)=0\hspace{0.3cm}\text{on}~\partial\Omega\times (0,T],\label{h2}\\
&u(x,0)=u_{0}(x)\hspace{0.3cm}\text{in}~\Omega,\label{h4}
\end{align}
where $T$ is the fixed final time, $\dot{u}=\frac{\partial u}{\partial t}$, $\nabla\equiv(\frac{\partial}
{\partial x_{1}},\frac{\partial}{\partial x_{2}})$,
$f$ and $u_{0}$ are appropriate smooth functions. Assume that $\alpha(x)$ is smooth and satisfies $0<m_{l}\leq
\alpha(x)\leq m_{u}$, for some positive constants $m_{l}$ and $m_{u}$ and for all $x\in\overline{\Omega}$.

The weak formulation of problem \eqref{h1}-\eqref{h4} is to find $u:[0,T]\rightarrow H^{1}_{0}(\Omega)$
such that
\begin{align}
(\dot{u},v)+&a(u,v)=f(v)\hspace{0.2cm}\forall v\in H^{1}_{0}(\Omega),\label{h43}\\
&u(0)=u_{0},\label{h44}
\end{align}
where
$$a(u,v)=\sum^{n_{0}}_{i=1}\int_{\Omega_{i}}\alpha\nabla u\cdot\nabla v~dx\hspace{0.2cm}\text{and}\hspace{0.2cm}f(v)
=\int_{\Omega}fv~dx.$$

We note that, \eqref{h43}-\eqref{h44} has a unique solution (cf. \cite{evans}) and the standard finite element methods with above formulation are extensively studied, cf. \cite{thomee}.

Let $\Omega$ be partitioned into non-overlapping polygonal subdomains $\{\Omega_{i}\}^{n_{0}}_{1}$  and
\begin{equation*}
\overline{\Omega}=\bigcup^{n_{0}}_{i=1}\overline{\Omega}_{i}.
\end{equation*}
This partition is said to be geometrically conforming if $\partial\Omega_{i}\cap\partial\Omega_{j}$
$(i\neq j)$ is  a vertex  or a whole edge of both subdomains $\Omega_{i}$ and $\Omega_{j}$ or is empty.
Otherwise it is called  geometrically nonconforming. Here, we discuss both the cases.

Now we define
\begin{equation}
X=\{v\in L^{2}(\Omega):v_{i}=v|_{\Omega_{i}}\in H^{1}_{D}(\Omega_{i}),1\leq i\leq n_{0}\},
\end{equation}
with the norm
\begin{equation}
{||v||}^{2}_{X}=\sum^{n_{0}}_{i=1}{||v||}^{2}_{H^{1}(\Omega_{i})},
\end{equation}
where 
\begin{equation}
H^{1}_{D}(\Omega_{i})=\{v\in H^{1}(\Omega_{i}):v|_{\partial\Omega\cap\partial\Omega_{i}}=0\}.
\end{equation}
Let the interface $\Gamma$ be defined as the union of the interfaces $\Gamma_{i,j}$
$(=\partial\Omega_{i}\cap\partial\Omega_{j})$ i.e., $\Gamma=\bigcup_{i,j}\Gamma_{i,j}$,
and be further partitioned into a set of disjoint line segments $\gamma_{j},j=1,2,\cdots,L$.
We denote $Z=\{\gamma_{1},\cdots,\gamma_{L}\}$. Also denote $\mathcal{A}$ to be the set of all
vertices of $\Omega_{i}$ for $1\leq i\leq n_{0}$.

Let $\mathcal{T}_{h_{i}}$ be a family of triangulation (in the sense of Ciarlet \cite{ciar}) of $\Omega_{i}$,
with triangles and parallelograms $K$ having the diameter $h_{K}$. Here $h_i$ is the mesh parameter defined as $h_{i}=\displaystyle\max_{K\in\mathcal{T}_{h_{i}}}{h_{K}}$. We assume the  following quasiuniformity and shape regularity conditions:
for each $K\in\mathcal{T}_{h_{i}}$  there exist positive constants  $\kappa$ and $\varrho$
independent of $h_{i}$ such that
\begin{equation}\label{h83}
\frac{h_{i}}{h_{K}}\leq\kappa
\end{equation}
and 
\begin{equation}\label{h84}
\frac{h_{K}}{\sigma_{K}}\leq\varrho,
\end{equation}
where   $\sigma_{K}=\sup\{\text{diam}(B):B ~\text{a ball in}~ K\}$.

We set $\mathcal{P}^{1}_{k}(K)$ to be the space of all polynomials having degree $\leq k$ and $\mathcal{P}^{2}_{k}(K)$
be the space of all polynomials having degree $\leq k$ in each variables. Also assume  $\mathcal{P}_{k}(K)$ denotes
$\mathcal{P}^{1}_{k}(K)$ if $K$ is a triangle and $\mathcal{P}^{2}_{k}(K)$ if $K$ is a square. For simplicity we assume
$k$ to be uniform in each subdomains although we can take polynomials of different degree in different subdomains as well
as in different elements.

We define finite dimensional subspace on each subdomain $\Omega_{i}$ as
\begin{equation}
X_{h_{i},k}=\{v\in H^{1}(\Omega_{i}):v|_{K}\in\mathcal{P}_{k}(K)\hspace{0.2cm}for\hspace{0.2cm}K
\in\mathcal{T}_{h_{i}},v=0\hspace{0.2cm}on\hspace{0.2cm}\partial\Omega_{i}\cap\partial\Omega\}.
\end{equation}
Here, $X_{h_{i},k}$ are conforming spaces over $\Omega_{i}$ i.e., they contain continuous functions in
$H^{1}(\Omega_{i})$ that vanishes on $\partial\Omega$.
Now we define the global space $X_{h,k}\subset X$ as
\begin{equation}
X_{h,k}=\{v\in L^{2}(\Omega):v_{i}\in X_{h_{i},k}\}.
\end{equation}
Note that, functions in $X_{h,k}$ do not satisfy any continuity condition across the interface.
Let $\gamma\in Z$ such that $\gamma\subset\Gamma_{i,j}$. We define two types of index associated with
$\gamma$, $i=M(\gamma)$  to be mortar index and $j=NM(\gamma)$ to be nonmortar index. Since independent
discretization is possible in different subdomains, we assume two separate meshes $\mathcal{T}^{h}_{M(\gamma)}$
and $\mathcal{T}^{h}_{NM(\gamma)}$, defined on $\gamma$, being inherited from $\Omega_{i}$ and $\Omega_{j}$), respectively.
We define $W^{M}(\gamma)$ to be  the  mortar trace space by
\begin{equation}
W^{M}(\gamma)=\{v_i|_{\gamma}:v_i\in X_{h_{i},k}\}.
\end{equation}
Similarly, we can define $W^{NM}(\gamma)$ for nonmortar trace.
For given $v\in X_{h,k}$, we denote the mortar and nonmortar traces of $v$ on $\gamma$ by $v^{M}_{\gamma}$
and $v^{NM}_{\gamma}$ respectively. In order
to impose the weak continuity across the common interface, we  confine the space $X_{h,k}$ by making the
jump $[\![v]\!]=v^{M}_{\gamma}-v^{NM}_{\gamma}$ orthogonal to a suitable multiplier space. This is called gluing
technique or mortaring technique and is accomplished with the help of  multiplier spaces
$S^{NM}_{h,k}(\gamma)$ defined on the nonmortar trace mesh $\mathcal{T}^{h}_{NM(\gamma)}$.

Let the subintervals of the cited mesh on $\gamma$ be given by $I_{i}=[x_{i},x_{i+1}]$, $0\leq i\leq l$. We define
\begin{align}
S^{NM}_{h,k}(\gamma)=\{\chi\in \mathcal{C}(\gamma):\chi|_{I_{i}}\in\mathcal{P}_{k}(I_{i}),&i=1,\cdots,l-1,\nonumber\\
&\chi|_{I_{j}}
\in\mathcal{P}_{k-1}(I_{j}),j=0,l\}.
\end{align}
Now we define the constrained space $V_{h,k}\subset X_{h,k}$ as the following,
\begin{equation}\label{h8}
V_{h,k}=\bigg\{v\in X_{h,k}:\int_{\gamma}(v^{M}_{\gamma}-v^{NM}_{\gamma})\chi~d\tau=0~
\forall\chi\in S^{NM}_{h,k}(\gamma)~\forall\gamma\in Z\bigg\}.
\end{equation}
Mortar formulation of the problem \eqref{h1}-\eqref{h4} is to find $u_{h,k}:[0,T]\rightarrow V_{h,k}$ such that
\begin{align}
(\dot{u}_{h,k},v_{h,k})+a(u_{h,k},v_{h,k})&=f(v_{h,k})\hspace{0.2cm}\forall v_{h,k}\in V_{h,k},\label{h34}\\
u_{h,k}(0)&=u_{0,h,k},\label{ini}
\end{align}
where $u_{0,h,k}$ is a suitable approximation of $u_{0}$ in $V_{h,k}$.
We note that, from the properties of the coefficient $\alpha(x)$, it is evident that, the bilinear form $a(\cdot,\cdot)$
satisfy boundedness  property: for $v,w\in X$, there exist a constant $M>0$   such that
\begin{equation}\label{h70}
a(v,w)\leq M{||v||}_{X}{||w||}_{X}.
\end{equation}
Also, $a(\cdot,\cdot)$ satisfies the coercivity property (cf. \cite{bernardy,sesh2}) for the functions in $V_{h,k}$:
for $v_{h,k}\in V_{h,k}$, there exists a constant $c>0$ independent of $h$ such that
\begin{equation}\label{h71}
a(v_{h,k},v_{h,k})\geq c{||v_{h,k}||}^{2}_{X}.
\end{equation}
We note that \eqref{h34} is equivalent to a linear system of ordinary differential equations and the corresponding matrix is positive definite. Therefore the existence and uniqueness of the solution to \eqref{h34} on $[0,T]$ follows from the Picard's theorem.

We let $C$ denote a generic positive constant throughout the paper. 

\section{Approximation properties}
For all $\gamma\in Z$, let $W^{NM}_{0}(\gamma)\subset W^{NM}(\gamma)$ denote the space of functions vanishing
at the end points of $\gamma$. Consider the operator
$\Pi^{h,k}_{\gamma}:L^{2}(\gamma)\rightarrow W^{NM}_{0}(\gamma)$ defined as the following, that is, for $v\in
L^{2}(\gamma)$ and $\gamma\in Z$, $\Pi^{h,k}_{\gamma}v\in W^{NM}_{0}(\gamma)$ satisfies
\begin{equation}\label{h72}
\int_{\gamma}(\Pi^{h,k}_{\gamma}v)\chi~ds=\int_{\gamma}v\chi~ds\hspace{0.2cm}\forall\chi\in S^{NM}_{h,k}(\gamma).
\end{equation}
The proof of the following lemma  can be found in \cite{sesh}.
\begin{lemma}
\label{lem3}
For any $\nu\geq 0$ and $\gamma\in Z$ the following estimate holds for all $\varphi\in H^{1/2+\nu}(\gamma)$:
\begin{equation}\label{h73}
{||\varphi-\Pi^{h,k}_{\gamma}\varphi||}_{H^{1/2}_{00}(\gamma)}\leq C~h^{\eta}~k^{-\nu}
(\log k)^{1/2}{||\varphi||}_{H^{1/2+\nu}(\gamma)},
\end{equation}
where $\eta=\min(\nu,k)$ and $C$ a positive constant independent of $h$ and $k$.
\end{lemma}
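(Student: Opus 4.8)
The plan is to establish the $H^{1/2}_{00}(\gamma)$ error bound for the projection $\Pi^{h,k}_{\gamma}$ by interpolating between an $L^{2}(\gamma)$ estimate and an $H^{1}_{0}(\gamma)$-type estimate for the one-dimensional operator, while carefully tracking the logarithmic factor that comes from the non-quasiuniformity of the trace mesh on $\gamma$ near the endpoints. Since $\Pi^{h,k}_{\gamma}$ is (essentially) the $L^{2}(\gamma)$-orthogonal projection onto the constrained nonmortar trace space $W^{NM}_{0}(\gamma)$ (the constraint space $S^{NM}_{h,k}(\gamma)$ being chosen so that \eqref{h72} determines $\Pi^{h,k}_{\gamma}v$ uniquely), I would first record its stability: $\|\Pi^{h,k}_{\gamma}v\|_{L^{2}(\gamma)}\le C\|v\|_{L^{2}(\gamma)}$, and, more delicately, an $H^{1}$-stability of the form $\|\Pi^{h,k}_{\gamma}v\|_{H^{1}(\gamma)}\le C(\log k)^{1/2}\|v\|_{H^{1}(\gamma)}$, this last being the source of the $(\log k)^{1/2}$ pollution in \eqref{h73}. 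This is the crux of the argument; it rests on inverse inequalities for the $hp$-spaces $\mathcal{P}_{k}$ on the possibly geometrically graded subintervals $I_{0},\dots,I_{l}$ together with a duality/Aubin--Nitsche-type argument, and is precisely the place where the known sharp estimates of Seshaiyer--Suri (cf. \cite{suri,sesh}) enter.

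Given stability, the second step is a standard approximation-theoretic one. For $\varphi\in H^{1/2+\nu}(\gamma)$ I would insert a suitable quasi-interpolant $I_{h,k}\varphi\in W^{NM}_{0}(\gamma)$ (the $hp$-interpolation operator on the one-dimensional mesh with the reduced-degree spaces at the end intervals, vanishing at the endpoints), use that $\Pi^{h,k}_{\gamma}$ fixes $W^{NM}_{0}(\gamma)$ so that $\varphi-\Pi^{h,k}_{\gamma}\varphi=(\varphi-I_{h,k}\varphi)-\Pi^{h,k}_{\gamma}(\varphi-I_{h,k}\varphi)$, and then invoke the stability bounds on the two pieces:
\begin{equation*}
\|\varphi-\Pi^{h,k}_{\gamma}\varphi\|_{H^{1/2}_{00}(\gamma)}\le C(\log k)^{1/2}\|\varphi-I_{h,k}\varphi\|_{H^{1/2}_{00}(\gamma)},
\end{equation*}
where the $(\log k)^{1/2}$ is carried through by interpolating the $L^{2}$- and $H^{1}$-stability estimates (the $L^{2}$ part contributes no log, so one gets the square-root). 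It then remains to bound $\|\varphi-I_{h,k}\varphi\|_{H^{1/2}_{00}(\gamma)}$. For this I would again interpolate between the $L^{2}(\gamma)$ and $H^{1}_{0}(\gamma)$ $hp$-approximation estimates on each subinterval, namely $\|\varphi-I_{h,k}\varphi\|_{L^{2}(I_{i})}\le C h_{i}^{\,\eta+1/2}k^{-(\nu+1/2)}\|\varphi\|_{H^{1/2+\nu}(I_{i})}$ and $\|\varphi-I_{h,k}\varphi\|_{H^{1}(I_{i})}\le C h_{i}^{\,\eta-1/2}k^{-(\nu-1/2)}\|\varphi\|_{H^{1/2+\nu}(I_{i})}$ with $\eta=\min(\nu,k)$, sum over $i$, and use the real-interpolation definition \eqref{h1/2_00} of the $H^{1/2}_{00}$-norm to land on the exponent pattern $h^{\eta}k^{-\nu}$ in \eqref{h73}.

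The main obstacle is clearly the $H^{1}$-stability of $\Pi^{h,k}_{\gamma}$ with the sharp $(\log k)^{1/2}$ constant on a non-quasiuniform one-dimensional mesh: naive use of inverse estimates on the graded end intervals would produce powers of $k$ rather than a logarithm, so one must exploit the precise structure of the constraint space $S^{NM}_{h,k}(\gamma)$ (the degree drop at $I_{0}$ and $I_{l}$) and a careful localized duality argument, exactly as in \cite{sesh,suri}. Everything else --- the quasi-interpolation error bounds, the summation over subintervals, and the real-interpolation bookkeeping to reach the $H^{1/2}_{00}$-norm --- is routine once that stability estimate is in hand, and since the paper explicitly cites \cite{sesh} for the proof, I would invoke that reference for the stability statement and assemble the approximation part as above.
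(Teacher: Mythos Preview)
Your proposal is reasonable, but note that the paper itself does not give a proof of this lemma at all: it simply states that the proof can be found in \cite{sesh}. Your sketch is precisely the outline of the argument in that reference (stability of $\Pi^{h,k}_{\gamma}$ in $L^{2}$ and in $H^{1}$ with the $(\log k)^{1/2}$ constant, then interpolation to $H^{1/2}_{00}$, combined with one-dimensional $hp$ quasi-interpolation error bounds), and you correctly identify the hard step and defer it to the same citation; so your approach and the paper's ``proof'' coincide.
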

Now we recall the following two lemmas, the proofs of which can be found in \cite{babuska}.
\begin{lemma}
\label{extn}
For each $\gamma\in Z$ and index $i$, such that $\gamma\subset\partial\Omega_{i}$ and $i=NM(\gamma),$ there exists an extension operator $R^{\gamma}_{h_i,k}:W^{NM}_{0}(\gamma)\rightarrow X_{h_i,k}$ satisfying, for all $z\in W^{NM}_{0}(\gamma)$,
$$R^{\gamma}_{h_i,k} z=z\hspace{0.2cm}\text{on}\hspace{0.2cm}\gamma,\hspace{0.2cm}
R^{\gamma}_{h_i,k}z=0\hspace{0.2cm}\text{on}\hspace{0.2cm}\partial\Omega_{i}\backslash\gamma,$$
\begin{equation}\label{h6}
{||R^{\gamma}_{h_i,k}z||}_{H^1(\Omega_{i})}\leq C~{||z||}_{H^{1/2}_{00}(\gamma)},
\end{equation}
where $C$ is a positive constant independent of $h, k, z$. 
\end{lemma}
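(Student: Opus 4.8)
\textbf{Proof proposal for Lemma~\ref{extn}.}

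The plan is to construct the extension operator explicitly on a reference configuration and then transport it to $\Omega_i$ by the affine (or quasi-affine) map that sends the reference element onto the physical elements of $\mathcal{T}_{h_i}$; the quasiuniformity \eqref{h83} and shape regularity \eqref{h84} guarantee that the constants produced on the reference element survive the transport without picking up any dependence on $h_i$. First I would recall the classical polynomial lifting on a single reference triangle (or square) $\hat K$: for a polynomial $\hat z$ of degree $\le k$ given on one edge $\hat e$ of $\hat K$ and vanishing at the two endpoints of $\hat e$, there is a polynomial $\hat E\hat z \in \mathcal{P}_k(\hat K)$ with $\hat E\hat z = \hat z$ on $\hat e$, $\hat E\hat z = 0$ on $\partial\hat K\setminus\hat e$, and $\|\hat E\hat z\|_{H^1(\hat K)} \le C\|\hat z\|_{H^{1/2}_{00}(\hat e)}$, with $C$ depending only on $\hat K$ and not on $k$. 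This $k$-uniform reference lifting is precisely the content of the polynomial extension theorems in \cite{babuska}, so I would simply cite it.

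Next I would assemble the global operator $R^{\gamma}_{h_i,k}$. Let $\gamma$ be subdivided by the nonmortar mesh $\mathcal{T}^h_{NM(\gamma)}$ into subintervals $I_m$, each of which is an edge of exactly one element $K_m \in \mathcal{T}_{h_i}$ abutting $\gamma$. Given $z \in W^{NM}_0(\gamma)$, on each such $K_m$ pull $z|_{I_m}$ back to the reference edge via the element map $F_{K_m}$, apply the reference lifting $\hat E$, and push forward to define $R^{\gamma}_{h_i,k}z$ on $K_m$; on every element of $\mathcal{T}_{h_i}$ not touching $\gamma$, set $R^{\gamma}_{h_i,k}z \equiv 0$. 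Because $z$ vanishes at the endpoints of $\gamma$ and is continuous along $\gamma$ (it lies in $W^{NM}(\gamma)\subset \mathcal{C}(\gamma)$), and because the reference lifting vanishes on the two non-$\hat e$ vertices, the pieces match continuously across interelement boundaries and across the "front" separating the abutting elements from the interior ones; hence $R^{\gamma}_{h_i,k}z \in X_{h_i,k}$. The boundary identities $R^{\gamma}_{h_i,k}z = z$ on $\gamma$ and $R^{\gamma}_{h_i,k}z = 0$ on $\partial\Omega_i\setminus\gamma$ are then immediate from the construction.

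For the stability bound \eqref{h6} I would sum the elementwise estimates. Scaling the reference inequality to $K_m$ introduces powers of $h_{K_m}$ that, after combining the $L^2$ and $H^1$ contributions, leave $\|R^{\gamma}_{h_i,k}z\|_{H^1(K_m)}^2 \le C\|z\|^2_{H^{1/2}_{00}(I_m)}$ up to the shape-regularity constant $\varrho$ (the scaling is the standard one for a first-order Sobolev norm, which is why only shape regularity, not a diameter, enters). Summing over the abutting elements $K_m$ gives $\|R^{\gamma}_{h_i,k}z\|_{H^1(\Omega_i)}^2 \le C\sum_m \|z\|^2_{H^{1/2}_{00}(I_m)}$, and the main technical point is to absorb the right-hand sum into the single $H^{1/2}_{00}(\gamma)$-norm of $z$. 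This is the step I expect to be the real obstacle: $H^{1/2}_{00}$ is not additive over a partition of $\gamma$ in the naive way, since the fractional seminorm couples points lying in different subintervals and the $00$-weight is sensitive to distance from the endpoints of $\gamma$ rather than of each $I_m$. I would handle it by invoking the quasiuniformity \eqref{h83} of the nonmortar mesh — which makes all $|I_m|$ comparable to $h_i$ — together with the known localization/superadditivity estimates for $H^{1/2}_{00}$ norms on quasiuniform partitions (again available from \cite{babuska}), so that $\sum_m \|z\|^2_{H^{1/2}_{00}(I_m)} \le C\|z\|^2_{H^{1/2}_{00}(\gamma)}$ with $C$ independent of $h_i$ and $k$. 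Combining this with the elementwise bounds yields \eqref{h6}, completing the proof.
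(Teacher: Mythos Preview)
The paper does not prove this lemma at all; it merely states that the proof ``can be found in \cite{babuska}'' and moves on. So there is no in-paper argument to compare against, only the cited reference-level construction.

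Your proposal, however, has a genuine gap in the assembly step. The single-element reference lifting you invoke requires the edge datum to vanish at both endpoints of the reference edge $\hat e$ (otherwise $\hat E\hat z$ could not simultaneously equal $\hat z$ on $\hat e$ and vanish on the two adjacent edges). But for an interior subinterval $I_m\subset\gamma$, the restriction $z|_{I_m}$ of a function $z\in W^{NM}_0(\gamma)$ has no reason to vanish at the interior mesh nodes $x_m,x_{m+1}$; only the two endpoints of $\gamma$ itself are forced to zero. Consequently your elementwise lifting is not defined on most $K_m$, the claimed interelement continuity breaks (the trace on the edge shared by $K_m$ and $K_{m+1}$ would have to be zero from both sides, yet must also hit the generally nonzero value $z(x_{m+1})$ at the common vertex), and the quantity $\|z\|_{H^{1/2}_{00}(I_m)}$ that you want to sum is in fact infinite whenever $z$ does not vanish at the endpoints of $I_m$. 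So the ``main technical point'' you flag is not merely delicate --- as stated it is false.

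The standard repair, which is what the cited construction in \cite{babuska} (and the related Bernardi--Maday arguments) actually does, is a two-stage lifting: first peel off the interior nodal values of $z$ by a discrete harmonic (or vertex-hat) extension supported near each node, bounding this part by $\|z\|_{H^{1/2}_{00}(\gamma)}$ via an inverse/trace argument on the quasiuniform mesh; the remainder then vanishes at every mesh node on $\gamma$, so your element-by-element $\hat E$ lifting applies to it, and the local $H^{1/2}_{00}(I_m)$ norms are now finite and can be controlled. If you insert this vertex/edge splitting your outline becomes correct.
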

\begin{lemma}
\label{lem1}
Let $K$ be a triangle or parallelogram with vertices $\{\mathcal{A}_{i}\}$ satisfying \eqref{h83} and \eqref{h84}.
Also assume $v\in H^{\nu}(K)$. Then there exists a positive constant $C$ independent of $v$, $k$ and $h_{K}$, but depends upon $\nu$, $\kappa$ and $\varrho$, and a sequence $\mathcal{I}^{h_{K}}_{k}v\in\mathcal{P}_{k}(K)$, such that for
any $0\leq \nu_{1}\leq \nu$
\begin{equation}\label{s30}
{||v-\mathcal{I}^{h_{K}}_{k}v||}_{H^{\nu_{1}}(K)}\leq C~h_{K}^{\eta-\nu_{1}}k^{-(\nu-\nu_{1})}{||v||}_{H^{\nu}(K)},
\end{equation}
where $\eta=\min(\nu,k+1)$.
If $\nu>3/2$ then we can assume that $\mathcal{I}^{h_{K}}_{k}v(\mathcal{A}_{i})=v(\mathcal{A}_{i})$. Further, for $\sigma\in[0,1]$
\begin{equation}\label{s31}
{||v-\mathcal{I}^{h_{K}}_{k}v||}_{\sigma,\gamma_{K}}\leq C~h_{K}^{\eta-1/2-\sigma}k^{-(\nu-1/2-\sigma)}{||v||}_{H^{\nu}(K)},
\end{equation}
where $\gamma_{K}$ is a side of $K$, while ${||\cdot||}_{\sigma,\gamma_{K}}$ is the norm defined on the interpolation space 
$[L^{2}(\gamma_{K}),H^{1}_{0}(\gamma_{K})]_{\sigma}$ and ${||\cdot||}_{0,\gamma_{K}}$ the norm defined on $L^{2}(\gamma_{K})$.
\end{lemma}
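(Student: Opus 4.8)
The plan is to transfer the estimate to a fixed reference element, invoke the classical $p$-version polynomial approximation theory of Babuška and Suri there, and then scale back, keeping track of the powers of $h_K$ and $k$ carefully. Let $\hat K$ be the reference triangle (resp.\ the unit square) and let $F_K(\hat x)=B_K\hat x+b_K$ be the affine bijection $\hat K\to K$. Conditions \eqref{h83}--\eqref{h84} yield $\|B_K\|\le Ch_K$, $\|B_K^{-1}\|\le Ch_K^{-1}$ and $C^{-1}h_K^{2}\le|\det B_K|\le Ch_K^{2}$, with $C=C(\kappa,\varrho)$. Hence, writing $\hat w=w\circ F_K$, the chain rule and a change of variables give, for integer $s$,
\begin{equation*}
C^{-1}h_K^{s-1}\,|w|_{H^{s}(K)}\le|\hat w|_{H^{s}(\hat K)}\le Ch_K^{s-1}\,|w|_{H^{s}(K)},
\end{equation*}
and by real interpolation the same holds for all real $s\ge0$, the constants depending only on $s,\kappa,\varrho$.

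\emph{Approximation on $\hat K$ and scaling back.} On the fixed element $\hat K$ one takes $\hat\pi_k\hat v\in\mathcal{P}_k(\hat K)$ to be the Babuška--Suri projection, built from the truncation at degree $k$ of the expansion of $\hat v$ in the orthogonal polynomials adapted to $\hat K$ (tensor Legendre on the square, Koornwinder--Dubiner on the triangle). Estimating the truncated tail by the decay of the coefficients, which is dictated by the Sobolev smoothness, and subtracting the $H^\mu$-best polynomial of degree $\le k$ before applying the sharp estimate (a Deny--Lions/Bramble--Hilbert step) gives
\begin{equation*}
\|\hat v-\hat\pi_k\hat v\|_{H^{\nu_1}(\hat K)}\le Ck^{-(\nu-\nu_1)}\,\|\hat v\|_{H^{\mu}(\hat K)},\qquad \nu_1\le\mu\le\min(\nu,k+1),
\end{equation*}
in which the $k$-rate is governed by the full smoothness $\nu$ while the admissible smoothness for the $h$-scaling is capped at $k+1$. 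Setting $\mathcal{I}^{h_K}_k v:=(\hat\pi_k\hat v)\circ F_K^{-1}\in\mathcal{P}_k(K)$, scaling back with the norm equivalences above, choosing $\mu=\eta=\min(\nu,k+1)$, and summing the contributions over $0\le\nu_1\le\mu$ produces \eqref{s30}; the absence of a $\log k$ factor is due to working on a single interior element rather than across an interface as in Lemma~\ref{lem3}.

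\emph{Vertex values and the trace estimate.} If $\nu>3/2$ then $H^\nu(\hat K)\hookrightarrow C^{0}(\overline{\hat K})$, so the nodal values $\hat v(\hat{\mathcal A}_i)$ make sense; one replaces $\hat\pi_k\hat v$ by $\hat\pi_k\hat v+\hat q$, where $\hat q$ is the fixed low-degree polynomial (affine on the triangle, bilinear on the square) interpolating the residuals $\hat v(\hat{\mathcal A}_i)-\hat\pi_k\hat v(\hat{\mathcal A}_i)$, whose magnitude is bounded by $\|\hat v-\hat\pi_k\hat v\|_{C^0(\overline{\hat K})}\le C\|\hat v-\hat\pi_k\hat v\|_{H^{3/2+\varepsilon}(\hat K)}$ and therefore does not degrade \eqref{s30}. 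Finally, for \eqref{s31} one combines the trace inequality on the fixed element, $\|\hat\psi\|_{\sigma,\hat\gamma_K}\le C\|\hat\psi\|_{H^{\sigma+1/2}(\hat K)}$ for $\sigma\in[0,1]$ (interpolating the $L^2$- and $H^1_0$-trace bounds), with \eqref{s30} at smoothness index $\nu_1=\sigma+\tfrac12$, and rescales by the one-dimensional analogue $|\hat g|_{\sigma,\hat\gamma_K}\sim h_K^{\sigma-1/2}|g|_{\sigma,\gamma_K}$; the $h_K$- and $k$-powers then assemble to $h_K^{\eta-1/2-\sigma}k^{-(\nu-1/2-\sigma)}$.

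The steps that need the most care, and which I expect to be the main obstacle, are: (i) keeping the $h_K$- and $k$-exponents simultaneously sharp, i.e.\ retaining the full $p$-rate $k^{-(\nu-\nu_1)}$ while the $h$-rate saturates at $k+1$ — this is precisely the reference-element estimate of Babuška and Suri recorded in \cite{babuska}; and (ii) verifying that the nodal correction $\hat q$ is controlled in every norm $H^{\nu_1}$, $0\le\nu_1\le\nu$, at once, without destroying the rates. Everything else is bookkeeping with the affine scaling and real interpolation on a fixed domain, for which the constants are uniform in $h_K$ and $k$.
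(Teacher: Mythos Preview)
The paper does not give a proof of this lemma at all: it simply states that the result is recalled from \cite{babuska} (Babu\v{s}ka--Suri, RAIRO 1987), so there is nothing in the paper to compare your argument to beyond that citation. Your sketch is precisely the standard route taken in that reference --- pull back to a fixed reference element, invoke the $p$-version approximation result there, scale back keeping the $h_K$- and $k$-powers, correct at the vertices when $\nu>3/2$, and deduce the edge estimate from a trace inequality --- so your proposal is consistent with what the paper intends and with the cited source.
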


For $v\in X$, we choose $I^{h}_{k}v\in X_{h,k}$ such that $I^{h}_{k}v$ equals $\mathcal{I}^{h_{K}}_{k}v_{i}$
on each $K\in\mathcal{T}_{h_{i}},1\leq i\leq n_{0}$ and define the operator $Q_{h,k}$ as
\begin{equation}\label{h79}
Q_{h,k}v=I^{h}_{k}v+\sum_{\gamma\in Z}w_{\gamma},
\end{equation}
where $w_{\gamma}=0$ when $\gamma$ is a mortar segment otherwise  $w_{\gamma}=R^{\gamma}_{h_i,k}(\Pi^{h,k}_{\gamma}(I^{h}_{k}v^{M}_{\gamma}-I^{h}_{k}v^{NM}_{\gamma}))$,
$i=NM(\gamma)$. Clearly $Q_{h,k}v$ belongs to $V_{h,k}$, and the following result holds.
\begin{lemma}
\label{lem4}
Let $v\in H^{1}_{0}(\Omega)$ such that $v_{i}\in H^{\nu}(\Omega_{i})$, $\nu> 3/2$. Then there exists a positive constant $C$ independent of $h$ and
$k$ such that
\begin{equation}\label{h81}
{||v-Q_{h,k}v||}_{X}\leq C~h^{\eta-1}~k^{-(\nu-1)}(\log k)^{1/2}\sum^{n_{0}}_{i=1}{||v||}_{H^{\nu}(\Omega_{i})},
\end{equation}
where $\eta=\min(\nu,k+1)$.
\end{lemma}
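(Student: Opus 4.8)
The plan is to estimate the approximation error by splitting $v - Q_{h,k}v$ into the interior interpolation part and the mortaring correction, and to control each with the lemmas already at hand. Writing
\[
v - Q_{h,k}v = (v - I^{h}_{k}v) - \sum_{\gamma \in Z} w_{\gamma},
\]
the triangle inequality in $\|\cdot\|_{X}$ reduces the task to bounding $\|v - I^{h}_{k}v\|_{X}$ and $\|w_{\gamma}\|_{H^{1}(\Omega_{i})}$ for each nonmortar $\gamma \subset \partial\Omega_{i}$ with $i = NM(\gamma)$.

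First I would handle the interior term. Since $\|v - I^{h}_{k}v\|_{X}^{2} = \sum_{i} \|v_{i} - I^{h}_{k}v_{i}\|_{H^{1}(\Omega_{i})}^{2}$ and on each $K \in \mathcal{T}_{h_{i}}$ the function $I^{h}_{k}v$ restricts to $\mathcal{I}^{h_{K}}_{k}v_{i}$, I apply Lemma~\ref{lem1} with $\nu_{1} = 1$ (legitimate since $\nu > 3/2 > 1$) to get $\|v_{i} - \mathcal{I}^{h_{K}}_{k}v_{i}\|_{H^{1}(K)} \leq C\, h_{K}^{\eta-1} k^{-(\nu-1)} \|v_{i}\|_{H^{\nu}(K)}$ with $\eta = \min(\nu,k+1)$. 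Using the quasiuniformity bound \eqref{h83} to replace $h_{K}$ by $h_{i}$ (hence by $h$) and summing the squares over $K$ and then over $i$ yields $\|v - I^{h}_{k}v\|_{X} \leq C\, h^{\eta-1} k^{-(\nu-1)} \sum_{i} \|v\|_{H^{\nu}(\Omega_{i})}$, which already has the desired form (indeed without the $(\log k)^{1/2}$ factor).

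Next I would handle the mortaring correction $w_{\gamma} = R^{\gamma}_{h_{i},k}\bigl(\Pi^{h,k}_{\gamma}(I^{h}_{k}v^{M}_{\gamma} - I^{h}_{k}v^{NM}_{\gamma})\bigr)$. By the stability of the extension operator, Lemma~\ref{extn}, $\|w_{\gamma}\|_{H^{1}(\Omega_{i})} \leq C\, \|\Pi^{h,k}_{\gamma}(I^{h}_{k}v^{M}_{\gamma} - I^{h}_{k}v^{NM}_{\gamma})\|_{H^{1/2}_{00}(\gamma)}$. The key observation is that since $v \in H^{1}_{0}(\Omega)$ is continuous across $\Gamma$, we have $v^{M}_{\gamma} = v^{NM}_{\gamma} = v|_{\gamma}$ as functions on $\gamma$; therefore
\[
I^{h}_{k}v^{M}_{\gamma} - I^{h}_{k}v^{NM}_{\gamma} = (I^{h}_{k}v^{M}_{\gamma} - v|_{\gamma}) - (I^{h}_{k}v^{NM}_{\gamma} - v|_{\gamma}),
\]
a difference of two interpolation errors on $\gamma$ coming from the two adjacent subdomains. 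I would insert $v|_{\gamma}$ once more and write $\Pi^{h,k}_{\gamma}(\cdot) = (\Pi^{h,k}_{\gamma}(\cdot) - v|_{\gamma}) + (v|_{\gamma} - \text{(itself)})$ — more precisely, bound $\|\Pi^{h,k}_{\gamma}w\|_{H^{1/2}_{00}}$ for $w = I^{h}_{k}v^{M}_{\gamma} - I^{h}_{k}v^{NM}_{\gamma}$ by noting $\Pi^{h,k}_{\gamma}v|_{\gamma}$ can be added and subtracted against $v|_{\gamma}$, using Lemma~\ref{lem3} to get $\|v|_{\gamma} - \Pi^{h,k}_{\gamma}v|_{\gamma}\|_{H^{1/2}_{00}(\gamma)} \leq C\, h^{\eta-1/2}k^{-(\nu-1/2)}(\log k)^{1/2}\|v\|_{H^{1/2+\nu'}(\gamma)}$ for the appropriate regularity $\nu' = \nu - 1/2$ on the trace, and using the $L^{2}$-stability of $\Pi^{h,k}_{\gamma}$ (together with an inverse estimate passing from $L^{2}$ to $H^{1/2}_{00}$ on the discrete space) applied to the two interpolation errors $I^{h}_{k}v^{M}_{\gamma} - v|_{\gamma}$ and $I^{h}_{k}v^{NM}_{\gamma} - v|_{\gamma}$, each estimated on the boundary by \eqref{s31} of Lemma~\ref{lem1}. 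Tracking the powers: the trace of $v_{i} \in H^{\nu}(\Omega_{i})$ lies in $H^{\nu-1/2}(\gamma)$, and \eqref{s31} with $\sigma = 1/2$ gives an interpolation error of order $h^{\eta-1}k^{-(\nu-1)}$ in the $H^{1/2}_{00}$-scale, matching the interior term, while Lemma~\ref{lem3} contributes exactly the $(\log k)^{1/2}$ factor that appears in \eqref{h81}. Summing over the finitely many $\gamma \in Z$ and absorbing the trace norms $\|v\|_{H^{\nu-1/2}(\gamma)} \leq C\|v_{i}\|_{H^{\nu}(\Omega_{i})}$ via the trace theorem gives the stated bound.

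The main obstacle I anticipate is the bookkeeping on the interface term: one must be careful that $\Pi^{h,k}_{\gamma}$ is only $L^{2}$-stable (not $H^{1/2}_{00}$-stable with a constant independent of $k$ — that is precisely the source of the pollution in the elliptic mortar theory), so the argument has to route the interpolation errors on $\gamma$ through $L^{2}(\gamma)$ and then back up to $H^{1/2}_{00}(\gamma)$ either by an inverse inequality on the finite-dimensional nonmortar space or by directly invoking the $H^{1/2}_{00}$-approximation estimate of Lemma~\ref{lem3}; reconciling the exponents of $h$ and $k$ so that the final rate is $h^{\eta-1}k^{-(\nu-1)}(\log k)^{1/2}$ rather than something worse is the delicate point. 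Everything else — the interior estimate, the extension stability, the trace theorem, and the summation over subdomains — is routine given the lemmas quoted above.
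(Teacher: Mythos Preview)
Your overall strategy---splitting $v-Q_{h,k}v$ into the interior interpolation error and the correction terms $w_\gamma$, bounding the interior part by Lemma~\ref{lem1} with $\nu_1=1$, and bounding $\|w_\gamma\|_{H^1(\Omega_i)}$ via the extension stability of Lemma~\ref{extn}---is exactly what the paper does.

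Where you diverge is in estimating $\|\Pi^{h,k}_\gamma\varphi\|_{H^{1/2}_{00}(\gamma)}$ for $\varphi=I^h_k v^M_\gamma-I^h_k v^{NM}_\gamma$. The paper avoids the detour through $\Pi^{h,k}_\gamma v|_\gamma$, $L^2$-stability, and inverse inequalities entirely: it simply writes
\[
\|\Pi^{h,k}_\gamma\varphi\|_{H^{1/2}_{00}(\gamma)}\le \|\varphi-\Pi^{h,k}_\gamma\varphi\|_{H^{1/2}_{00}(\gamma)}+\|\varphi\|_{H^{1/2}_{00}(\gamma)}.
\]
The second term is handled by \eqref{s31} with $\sigma=1/2$, since $\varphi=(I^h_k v^M_\gamma-v|_\gamma)-(I^h_k v^{NM}_\gamma-v|_\gamma)$ is a difference of two boundary interpolation errors, each of size $h^{\eta-1}k^{-(\nu-1)}$. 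The first term is Lemma~\ref{lem3} applied to $\varphi$ itself (taking the regularity index there to be $0$, so the factor is just $(\log k)^{1/2}$ times $\|\varphi\|_{H^{1/2}(\gamma)}$), and $\|\varphi\|_{H^{1/2}(\gamma)}$ is again controlled by \eqref{s31}. This is where the $(\log k)^{1/2}$ enters, and nothing more is needed.

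Your proposed route through $L^2$-stability of $\Pi^{h,k}_\gamma$ followed by an inverse estimate back up to $H^{1/2}_{00}$ is the place to worry: the $hp$ inverse inequality from $L^2(\gamma)$ to $H^{1/2}_{00}(\gamma)$ costs a factor like $k\,h^{-1/2}$ rather than $(k/h)^{1/2}$, so you would likely lose an extra half-power of $k$ and fail to reach the stated rate. The obstacle you anticipate is genuine for that route, but it disappears once you use the direct triangle inequality above.
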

\begin{proof}
Since for $i=NM(\gamma)$, $w_{\gamma}\in X_{h_i,k}$, from lemmas \ref{lem3}, \ref{extn}, \ref{lem1}
and triangle inequality we find
\begin{align}
{||w_{\gamma}||}_{H^{1}(\Omega_{i})}
&\leq {||(I^{h}_{k}v^{M}_{\gamma}-I^{h}_{k}v^{NM}_{\gamma})-\Pi^{h,k}_{\gamma}
(I^{h}_{k}v^{M}_{\gamma}-I^{h}_{k}v^{NM}_{\gamma})||}_{H^{1/2}_{00}(\gamma)}\nonumber\\
&\hspace{0.5cm}+{||I^{h}_{k}v^{M}_{\gamma}-I^{h}_{k}v^{NM}_{\gamma}||}_{H^{1/2}_{00}(\gamma)}\nonumber\\
&\leq C~h^{\eta-1}~k^{-(\nu-1)}(\log k)^{1/2}\sum^{n_{0}}_{i=1}{||v||}_{H^{\nu}(\Omega_{i})}.\nonumber
\end{align}
Hence \eqref{h81} follows from  \eqref{h79} and  Lemma \ref{lem1}.
\end{proof}
The lemma below is used to compute the consistency error of the approximation. 
\begin{lemma}
\label{nonconf}
Assume that for $t\in[0,T]$, $u(t)\in H^{1}_{0}(\Omega)$ and $u_{i}(t),\dot{u}_{i}(t)\in H^{\nu}(\Omega_{i})$, $\nu> 3/2$. Then for a geometrically nonconforming partition of $\Omega$, the following estimates hold
for any $w_{h,k}\in V_{h,k}$:
\begin{equation}\label{h76}
\frac{\displaystyle\int_{\Gamma}\alpha\nabla u\cdot n[\![w_{h,k}]\!]d\tau}{{||w_{h,k}||}_{X}}
\leq C ~h^{\eta-1}~k^{-(\nu-1)}\bigg|\log\frac{k}{h}\bigg|^{1/2}\sum^{n_{0}}_{i=1}{||u||}_{H^{\nu}(\Omega_{i})}
\end{equation}
and
\begin{equation}\label{h78}
\frac{\displaystyle\int_{\Gamma}\alpha\nabla \dot{u}\cdot n[\![w_{h,k}]\!]d\tau}{{||w_{h,k}||}_{X}}
\leq C ~h^{\eta-1}~k^{-(\nu-1)}\bigg|\log\frac{k}{h}\bigg|^{1/2}\sum^{n_{0}}_{i=1}{||\dot{u}||}_{H^{\nu}(\Omega_{i})},
\end{equation}
where $\eta=\min(\nu,k+1)$, and $C$ is a positive constant independent of $h$ and $k$.
\end{lemma}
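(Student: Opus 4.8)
The plan is to exploit the defining orthogonality of the mortar projection $\Pi^{h,k}_\gamma$ against the multiplier space $S^{NM}_{h,k}(\gamma)$ to ``insert'' a suitable interpolant of the normal flux for free, and then to bound what is left by a duality/interpolation argument on each nonmortar segment. First I would observe that since $w_{h,k}\in V_{h,k}$, for each $\gamma\in Z$ the jump $[\![w_{h,k}]\!]=w^M_\gamma-w^{NM}_\gamma$ satisfies $\int_\gamma[\![w_{h,k}]\!]\chi\,d\tau=0$ for all $\chi\in S^{NM}_{h,k}(\gamma)$; also $[\![w_{h,k}]\!]$ vanishes at the endpoints of $\gamma$ (being a difference of traces that are continuous and vanish appropriately), so $[\![w_{h,k}]\!]\in H^{1/2}_{00}(\gamma)$ with $\sum_\gamma\|[\![w_{h,k}]\!]\|_{H^{1/2}_{00}(\gamma)}\le C\|w_{h,k}\|_X$ by the trace/extension estimate of Lemma~\ref{extn} applied on both sides. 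Writing $\phi=\alpha\nabla u\cdot n|_\gamma$, I then want to subtract an approximation $\phi_{h,k}\in S^{NM}_{h,k}(\gamma)$ of $\phi$: by the orthogonality, $\int_\gamma\phi[\![w_{h,k}]\!]\,d\tau=\int_\gamma(\phi-\phi_{h,k})[\![w_{h,k}]\!]\,d\tau$, so that
\begin{equation}\label{pp1}
\Big|\int_\Gamma\alpha\nabla u\cdot n[\![w_{h,k}]\!]\,d\tau\Big|\le\sum_{\gamma\in Z}\|\phi-\phi_{h,k}\|_{H^{-1/2}_{00}(\gamma)}\,\|[\![w_{h,k}]\!]\|_{H^{1/2}_{00}(\gamma)}.
\end{equation}

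Next I would estimate $\|\phi-\phi_{h,k}\|_{H^{-1/2}_{00}(\gamma)}$. The natural choice is to take $\phi_{h,k}$ to be (a modification near the endpoints of) the one-dimensional interpolant on the nonmortar trace mesh of the function $\alpha\nabla u\cdot n$, which is well defined since $u_i\in H^\nu(\Omega_i)$ with $\nu>3/2$ gives $\alpha\nabla u\cdot n\in H^{\nu-3/2}(\gamma)$ on each segment, a space with $\nu-3/2>0$ so traces and interpolants make sense. Using Lemma~\ref{lem1}, in particular the trace estimate \eqref{s31} for the flux together with the inverse-type passage from the $L^2(\gamma_K)$/$H^1_0(\gamma_K)$ interpolation scale down to the negative norm $H^{-1/2}_{00}$, one obtains on each $\gamma$ a bound of the form $Ch^{\eta-1}k^{-(\nu-1)}(\log(k/h))^{1/2}\|u\|_{H^\nu(\Omega_i)}$; the logarithmic factor is exactly the one produced, as in Lemma~\ref{lem3}, by passing through the $H^{1/2}_{00}$-type norm on the nonquasiuniform endpoint elements $I_0,I_l$ of the nonmortar mesh, and the $|\log(k/h)|$ rather than $\log k$ accounts for the joint $h$- and $k$-scaling. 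Combining this with $\sum_\gamma\|[\![w_{h,k}]\!]\|_{H^{1/2}_{00}(\gamma)}\le C\|w_{h,k}\|_X$ in \eqref{pp1} and dividing by $\|w_{h,k}\|_X$ yields \eqref{h76}. The estimate \eqref{h78} follows verbatim with $u$ replaced by $\dot u$, using the hypothesis $\dot u_i(t)\in H^\nu(\Omega_i)$ and noting the flux bilinear form and the mortar constraint are linear, so differentiation in $t$ commutes with all the spatial operators involved.

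The main obstacle I expect is the careful treatment of the negative norm $\|\phi-\phi_{h,k}\|_{H^{-1/2}_{00}(\gamma)}$ on the two endpoint subintervals of the nonmortar mesh, where $S^{NM}_{h,k}(\gamma)$ uses degree $k-1$ rather than $k$ and where the $H^{1/2}_{00}$ weight blows up: this is precisely where the $(\log)^{1/2}$ pollution enters and where one must either invoke the sharpened approximation result behind Lemma~\ref{lem3} or redo that argument for the flux. A secondary technical point is justifying that $[\![w_{h,k}]\!]$ genuinely belongs to $H^{1/2}_{00}(\gamma)$ with the stated stability bound when the partition is only geometrically nonconforming, so that the interface $\Gamma$ decomposes into the segments $\gamma_j$ and the duality pairing on each $\gamma$ is legitimate; this is handled by the extension operator of Lemma~\ref{extn} together with the fact that, by construction of $V_{h,k}$, the jump vanishes at the endpoints of each $\gamma$. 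Everything else is a routine summation over $\gamma\in Z$ and an application of Cauchy--Schwarz, absorbing the (finitely many) subdomain sums into the constant $C$.
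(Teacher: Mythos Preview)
There is a genuine gap in your argument. The lemma is stated for a \emph{geometrically nonconforming} partition, and in that setting your claim that $[\![w_{h,k}]\!]$ vanishes at the endpoints of each $\gamma\in Z$ is false: an endpoint of $\gamma$ may lie in the interior of an edge of one of the two adjacent subdomains, so neither $w^M_\gamma$ nor $w^{NM}_\gamma$ has any reason to vanish (or to agree) there. Consequently $[\![w_{h,k}]\!]\notin H^{1/2}_{00}(\gamma)$ in general, the $H^{-1/2}_{00}\times H^{1/2}_{00}$ duality in your displayed inequality is not available, and the stability bound $\sum_\gamma\|[\![w_{h,k}]\!]\|_{H^{1/2}_{00}(\gamma)}\le C\|w_{h,k}\|_X$ does not hold. (Lemma~\ref{extn} is an extension result, not a trace bound, and cannot supply this either.)

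The paper handles exactly this obstruction by working instead in $H^{1/2-\epsilon}(\gamma)$ for $0<\epsilon\le 1/2$: the jump does lie there, with the uniform bound $\|[\![w_{h,k}]\!]\|_{H^{1/2-\epsilon}(\gamma)}\le C\,\epsilon^{-1/2}\|w_{h,k}\|_X$. After the orthogonality step you correctly identify, one pairs with $H^{-1/2+\epsilon}(\gamma)$ and uses best approximation in $S^{NM}_{h,k}(\gamma)$ to obtain $\inf_\psi\|\alpha\nabla u\cdot n-\psi\|_{H^{-1/2+\epsilon}(\gamma)}\le C\,h^{\eta-1-\epsilon}k^{-(\nu-1-\epsilon)}\sum_i\|u\|_{H^\nu(\Omega_i)}$. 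The factor $|\log(k/h)|^{1/2}$ then arises not from an $H^{1/2}_{00}$-endpoint analysis of the flux interpolant as you suggest, but from optimizing over $\epsilon$: choosing $\epsilon=(\log(k/h))^{-1}$ makes $h^{-\epsilon}k^{\epsilon}$ bounded and converts $\epsilon^{-1/2}$ into $|\log(k/h)|^{1/2}$. Your route is essentially the argument for the geometrically \emph{conforming} case (where the jump is in $H^{1/2}_{00}$ and the pollution is only $(\log k)^{1/2}$), but that is not the statement being proved here.
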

\begin{proof}
Since the partition of $\Omega$ is geometrically nonconforming, the jump $[\![w_{h,k}]\!]$ belong to
$H^{1/2-\epsilon}(\gamma)$, $0<\epsilon\leq 1/2$. On each $\gamma\in Z$, from the definition of
$V_{h,k}$ and $\Pi^{h,k}_{\gamma}$, we can write
\begin{align}
\int_{\gamma}\alpha\nabla u\cdot n[\![w_{h,k}]\!]d\tau
&=\int_{\gamma}\big(\alpha\nabla u\cdot n-\psi\big)[\![w_{h,k}]\!]d\tau,\hspace{0.2cm}\forall
\psi\in S^{NM}_{h,k}(\gamma)\nonumber\\
&\leq\inf_{\psi\in S^{NM}_{h,k}(\gamma)}{||\alpha\nabla u\cdot n-\psi||}_{H^{-1/2+\epsilon}(\gamma)}
{||[\![w_{h,k}]\!]||}_{H^{1/2-\epsilon}(\gamma)}.\label{h77}
\end{align}
Now the best approximation property gives
\begin{equation}
\inf_{\psi\in S^{NM}_{h,k}(\gamma)}{||\alpha\nabla u\cdot n-\psi||}_{H^{-1/2+\epsilon}(\gamma)}
\leq C~h^{\eta-1-\epsilon}k^{-(\nu-1-\epsilon)}\sum^{n_{0}}_{i=1}{||u||}_{H^{\nu}(\Omega_{i})},
\end{equation}
and as in \cite{bernardy1} we observe that
\begin{equation}
{||[\![w_{h,k}]\!]||}_{H^{1/2-\epsilon}(\gamma)}
\leq C~\epsilon^{-1/2}{||w_{h,k}||}_{X}.
\end{equation}
From \eqref{h77}, we obtain
\begin{align}
\int_{\gamma}\alpha\nabla u\cdot n[\![w_{h,k}]\!]d\tau
&\leq C~h^{\eta-1-\epsilon}k^{-(\nu-1-\epsilon)}\epsilon^{-1/2}\sum^{n_{0}}_{i=1}
{||u||}_{H^{\nu}(\Omega_{i})}{||w_{h,k}||}_{X}.\nonumber
\end{align}
Taking $\epsilon=\big(\log \frac{k}{h}\big)^{-1}$, we see that $h^{-\epsilon}k^{\epsilon}$ is a constant
($\approx$ 2.7183 if $h<1$).  Summing over all $\gamma\in Z$, we find
\begin{align}
\int_{\Gamma}\alpha\nabla u\cdot n[\![w_{h,k}]\!]d\tau
\leq C~h^{\eta-1}k^{-(\nu-1)}\bigg|\log\frac{k}{h}\bigg|^{1/2}\sum^{n_{0}}_{i=1}
{||u||}_{H^{\nu}(\Omega_{i})}{||w_{h,k}||}_{X}.\nonumber
\end{align}
Hence, \eqref{h76} follows. Replacing $u$ by $\dot{u}$ and proceeding in the same way as above,
\eqref{h78} follows.
\end{proof}
We define a modified elliptic projection $P_{h,k}$ from $\displaystyle{\prod^{n_{0}}_{i=1} H^{3/2}(\Omega_{i})}$ onto $V_{h,k}$ (cf. \cite{patel}) as follows, i.e., for a given $u\in {\displaystyle\prod^{n_{0}}_{i=1} H^{3/2}(\Omega_{i})}$, find $P_{h,k}u\in V_{h,k}$ such that
\begin{equation}\label{h7}
a(u-P_{h,k}u,\chi)=\sum_{\gamma\in Z}\int_{\gamma}\alpha\nabla u\cdot n[\![ \chi ]\!]~d\tau\hspace{0.2cm}\forall\chi\in V_{h,k}.
\end{equation}
Note that, since the bilinear form $a(\cdot,\cdot)$ satisfies the coercivity property \eqref{h71},
for a given $u\in X$, the problem \eqref{h7} has a unique solution $P_{h,k}u\in V_{h,k}$.  The following lemma is on the estimate of the above intermediate finite element approximation $P_{h,k}u\in V_{h,k}$ of the exact solution $u$ of \eqref{h13}-\eqref{h14}. 
\begin{lemma}
\label{proj}
Assume that for $t\in[0,T]$, $u(t)\in H^{1}_{0}(\Omega)$ and $u_{i}(t),\dot{u}_{i}(t)\in H^{\nu}(\Omega_{i})$, $\nu> 3/2$.
Then for a geometrically nonconforming partition of $\Omega$, there exists a positive constant $C$ independent of $h$, $k$ such that
\begin{align}\label{h29}
(\log k)^{-1/2}&{||u-P_{h,k}u||}_{L^{2}(\Omega)}+h~k^{-1}{||u-P_{h,k}u||}_{X}\nonumber\\
&\leq C~h^{\eta}~k^{-\nu}\bigg|\log\frac{k}{h}\bigg|^{1/2}(\log k)^{1/2}
\sum^{n_{0}}_{i=1}{||u||}_{H^{\nu}(\Omega_{i})}
\end{align}
and
\begin{align}\label{h30}
(\log k)^{-1/2}&{||\dot{u}-P_{h,k}\dot{u}||}_{L^{2}(\Omega)}+h~k^{-1}{||\dot{u}-P_{h,k}\dot{u}||}_{X}\nonumber\\
&\leq
C~h^{\eta}~k^{-\nu}\bigg|\log\frac{k}{h}\bigg|^{1/2}(\log k)^{1/2}\sum^{n_{0}}_{i=1}{||\dot{u}||}_{H^{\nu}(\Omega_{i})},
\end{align}
where $\eta=\min(\nu,k+1)$.
\end{lemma}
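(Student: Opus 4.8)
The plan is to estimate $u-P_{h,k}u$ by comparing with the quasi-interpolant $Q_{h,k}u$ from Lemma~\ref{lem4} and using Strang-type arguments, and then to recover the $L^2$-bound by an Aubin--Nitsche duality argument adapted to the nonconforming setting. First I would establish the $X$-norm (energy-norm) estimate. Writing $e=u-P_{h,k}u$ and inserting $Q_{h,k}u\in V_{h,k}$, split $e=(u-Q_{h,k}u)+(Q_{h,k}u-P_{h,k}u)=:\theta+\phi$ with $\phi\in V_{h,k}$. By coercivity \eqref{h71}, $c\|\phi\|_X^2\le a(\phi,\phi)=a(Q_{h,k}u-u,\phi)+a(e,\phi)$. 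The first term is bounded by $M\|\theta\|_X\|\phi\|_X$ using boundedness \eqref{h70}, and the second term is exactly the consistency term $\sum_{\gamma\in Z}\int_\gamma\alpha\nabla u\cdot n[\![\phi]\!]\,d\tau$ by the defining relation \eqref{h7}, which Lemma~\ref{nonconf} (estimate \eqref{h76}) bounds by $C\,h^{\eta-1}k^{-(\nu-1)}|\log(k/h)|^{1/2}\sum_i\|u\|_{H^\nu(\Omega_i)}\|\phi\|_X$. Dividing by $\|\phi\|_X$ and combining with Lemma~\ref{lem4} for $\|\theta\|_X$ (whose bound also carries a $(\log k)^{1/2}$ factor) and a triangle inequality $\|e\|_X\le\|\theta\|_X+\|\phi\|_X$ gives
\[
\|u-P_{h,k}u\|_X\le C\,h^{\eta-1}\,k^{-(\nu-1)}\Big|\log\tfrac{k}{h}\Big|^{1/2}(\log k)^{1/2}\sum_{i=1}^{n_0}\|u\|_{H^\nu(\Omega_i)},
\]
which is the second term on the left of \eqref{h29} after multiplying by $h\,k^{-1}$.

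Next, for the $L^2$-estimate I would use duality. Let $\psi\in H^1_0(\Omega)$ solve the stationary problem $a(\psi,v)=(e,v)$ for all $v\in H^1_0(\Omega)$ (so elliptic regularity gives $\psi_i\in H^{2}(\Omega_i)$ or at least $H^{3/2+s}$ with $\|\psi\|\lesssim\|e\|_{L^2(\Omega)}$; one needs $H^{3/2}$-plus regularity for the interface terms, which is available since $\Omega$ is polygonal and the decomposition is shape-regular—here I would cite the same regularity hypothesis used in \cite{patel}). Then $\|e\|_{L^2(\Omega)}^2=(e,e)=a(\psi,e)$, but because $e\notin H^1_0(\Omega)$ (its nonmortar jumps do not vanish) there is a symmetric consistency term: using integration by parts on the dual problem one gets $a(\psi,e)=a(P_{h,k}\psi,e)+\sum_{\gamma}\int_\gamma\alpha\nabla\psi\cdot n[\![e]\!]\,d\tau$, while $a(P_{h,k}\psi,e)=a(P_{h,k}\psi-\psi,e)+a(\psi,e)$... the cleaner route is to write, with $\chi=P_{h,k}\psi\in V_{h,k}$ (legitimate since $\psi\in\prod H^{3/2}(\Omega_i)$),
\[
\|e\|_{L^2(\Omega)}^2=a(\psi,e)=a(\psi-\chi,\,e)+a(\chi,e)
=a(\psi-\chi,e)+\sum_{\gamma\in Z}\int_\gamma\alpha\nabla u\cdot n[\![\chi]\!]\,d\tau,
\]
using \eqref{h7} for the last equality. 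The first term is $\le M\|\psi-\chi\|_X\|e\|_X$; by the energy estimate just proved applied to $\psi$ (with $\nu$ replaced by the regularity exponent $\approx2$), $\|\psi-\chi\|_X\le C\,h\,k^{-1}|\log(k/h)|^{1/2}(\log k)^{1/2}\|e\|_{L^2(\Omega)}$, and multiplying by $\|e\|_X$ and its bound yields the desired $h^\eta k^{-\nu}$ order with the stated logarithmic factors (one $(\log k)^{1/2}$ surplus is absorbed into the $(\log k)^{1/2}$ on the left of \eqref{h29}). The second term is handled again by Lemma~\ref{nonconf}-type reasoning: $[\![\chi]\!]=[\![\chi-\psi]\!]$ since $\psi$ is continuous, so it is bounded by $\sum_\gamma\|\alpha\nabla u\cdot n-\text{(its multiplier projection)}\|_{H^{-1/2+\epsilon}}\|[\![\psi-\chi]\!]\|_{H^{1/2-\epsilon}}$, and the jump of $\psi-\chi$ is controlled by $\epsilon^{-1/2}\|\psi-\chi\|_X$ exactly as in the proof of Lemma~\ref{nonconf}; optimizing $\epsilon=(\log(k/h))^{-1}$ again produces the $|\log(k/h)|^{1/2}$ factor, and $\|\psi-\chi\|_X$ contributes another $h\,k^{-1}$ times a log, so this term is of the same (or higher) order. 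Collecting all contributions and dividing through establishes \eqref{h29}.

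Finally, \eqref{h30} follows by the same two-step argument applied to $\dot u$ in place of $u$, invoking \eqref{h78} instead of \eqref{h76} and the hypothesis $\dot u_i(t)\in H^\nu(\Omega_i)$; note that since $P_{h,k}$ is linear and time-independent, differentiating \eqref{h7} in $t$ shows $P_{h,k}\dot u=\tfrac{\partial}{\partial t}P_{h,k}u$ satisfies the analogous identity with $u$ replaced by $\dot u$, so no new ideas are needed. I expect the main obstacle to be the duality step for the $L^2$-bound: one must (i) justify enough elliptic regularity for the dual solution on the polygonal subdomains so that $\nabla\psi\cdot n$ makes sense as a negative-norm functional on each $\gamma$, and (ii) track the interface consistency term carefully so that the logarithmic factors accumulate exactly as claimed rather than worse — in particular verifying that applying the energy estimate to $\psi$ (whose regularity exponent may be only slightly above $3/2$) still delivers the clean $h\,k^{-1}$ factor needed to close the bootstrap.
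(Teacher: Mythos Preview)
Your overall strategy---Strang-type energy bound followed by Aubin--Nitsche duality---is exactly the route the paper takes (it simply cites Lemma~3.5 of \cite{patel}; the same skeleton reappears in the proof of Lemma~\ref{negp} later in the paper). The $X$-norm step is correct as written.

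There is, however, a real slip in your duality step. You write
\[
\|e\|_{L^2(\Omega)}^2=a(\psi,e)=a(\psi-\chi,e)+\sum_{\gamma\in Z}\int_\gamma\alpha\nabla u\cdot n\,[\![\chi]\!]\,d\tau,
\]
but the first equality is false: since $e=u-P_{h,k}u$ has jumps across the interfaces, integrating $-\nabla\cdot(\alpha\nabla\psi)=e$ against $e$ over each $\Omega_i$ gives
\[
\|e\|_{L^2(\Omega)}^2=a(\psi,e)\;-\;\sum_{\gamma\in Z}\int_\gamma\alpha\nabla\psi\cdot n\,[\![e]\!]\,d\tau.
\]
So after inserting $\chi\in V_{h,k}$ and using \eqref{h7} you get \emph{three} terms, not two: (i) $a(e,\psi-\chi)$, (ii) $\sum_\gamma\int_\gamma\alpha\nabla u\cdot n\,[\![\chi]\!]\,d\tau$, and (iii) the dual consistency term $\sum_\gamma\int_\gamma\alpha\nabla\psi\cdot n\,[\![e]\!]\,d\tau$ you dropped. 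You did flag a ``symmetric consistency term'' a line earlier, but it never reappears in your ``cleaner route''. The fix is routine---term (iii) is bounded exactly as in Lemma~\ref{nonconf}, now with $\psi$ in place of $u$ and regularity exponent $2$, giving $C\,h\,k^{-1}|\log(k/h)|^{1/2}\|\psi\|_{H^2}\|e\|_X\le C\,h\,k^{-1}|\log(k/h)|^{1/2}\|e\|_{L^2}\|e\|_X$---so it does not spoil the order, but the identity you display is incorrect and must be repaired.

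One smaller point of comparison: the paper (see the proof of Lemma~\ref{negp}) inserts $\chi=Q_{h,k}\psi$ rather than $\chi=P_{h,k}\psi$. This avoids invoking the projection estimate for $\psi$ inside its own proof and lets you use Lemma~\ref{lem4} directly; your choice also works, but is slightly more circular. Everything you say about \eqref{h30} following by replacing $u$ with $\dot u$ and differentiating \eqref{h7} is fine.
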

\begin{proof}
Using lemmas \ref{lem1}, \ref{lem4} and \ref{nonconf}, and proceeding as in the proof of the Lemma 3.5 of \cite{patel}, the \eqref{h29} and \eqref{h30} follows. 
\end{proof}
\begin{remark}
We may rewrite  \eqref{h29} 
for a geometrically conforming partition of $\Omega$ as
\begin{align*}
(\log k)^{-1/2}&{||u-P_{h,k}u||}_{L^{2}(\Omega)}+h~k^{-1}{||u-P_{h,k}u||}_{X}\\
&\leq C~h^{\eta}~k^{-\nu}(\log k)^{1/2}
\sum^{n_{0}}_{i=1}{||u||}_{H^{\nu}(\Omega_{i})}.
\end{align*}
 Similarly, \eqref{h30} can be written for the geometrically conforming case. 
\end{remark}
\section{Error estimates for the semidiscrete method}
For $v\in X$ and for $t\in(0,T]$, from equation \eqref{h1}-\eqref{h4}, we find
\begin{align}
(\dot{u},v)+a(u,v)&=f(v)+\sum_{\gamma\in Z}\int_{\gamma}\alpha\nabla u\cdot n[\![v]\!]d\tau,\label{h13}\\
u(0)&=u_{0}.\label{h14}
\end{align}
\begin{theorem}
\label{thm1}
Let $u$ and $u_{h,k}$ be the solutions of \eqref{h13}-\eqref{h14} and \eqref{h34}-\eqref{ini}, respectively.
Further, let $u_{0,h,k}=I^{h}_{k}u_{0}$ or $P_{h,k}u_{0}$. Then for  a geometrically nonconforming partition of $\Omega$ there exists a positive constant $C$ independent of $h$, $k$ and
$u$ such that for $t\in(0,T]$,
\begin{equation}\label{h19}
{||(u-u_{h,k})(t)||}_{L^{2}(\Omega)}\leq Ch^{\eta}~k^{-\nu}\log k\bigg|\log\frac{k}{h}\bigg|^{\frac{1}{2}}\sum^{n_{0}}_{i=1}\bigg({||u_{0}||}_{H^{\nu}(\Omega_{i})}+{||\dot{u}||}_{L^{2}(0,T;H^{\nu}(\Omega_{i}))}\bigg)
\end{equation}
and
\begin{align}\label{h20}
{||(u-u_{h,k})(t)||}_{X}\leq Ch^{\eta-1}~k^{-(\nu-1)}(\log k)^{\frac{1}{2}}\bigg|\log\frac{k}{h}\bigg|^{\frac{1}{2}}\sum^{n_{0}}_{i=1}\bigg(&{||u_{0}||}_{H^{\nu}(\Omega_{i})}\nonumber\\
&+{||\dot{u}||}_{L^{2}(0,T;H^{\nu}(\Omega_{i}))}\bigg)
\end{align}
where $\eta=\min(\nu,k+1)$.
\end{theorem}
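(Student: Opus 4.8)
The plan is to split the error $u-u_{h,k}$ into two pieces via the modified elliptic projection $P_{h,k}$: write $u-u_{h,k}=(u-P_{h,k}u)+(P_{h,k}u-u_{h,k})=:\rho+\theta$. The term $\rho$ is controlled directly by Lemma~\ref{proj}, so the main work is to estimate $\theta\in V_{h,k}$. To do this I would derive an evolution equation for $\theta$: subtract the mortar scheme \eqref{h34} from the perturbed weak form \eqref{h13}, use the defining property \eqref{h7} of $P_{h,k}$ to cancel the interface consistency terms $\sum_\gamma\int_\gamma\alpha\nabla u\cdot n[\![\chi]\!]\,d\tau$ against $a(\rho,\chi)$, and arrive at
\begin{equation*}
(\dot\theta,\chi)+a(\theta,\chi)=-(\dot\rho,\chi)\qquad\forall\,\chi\in V_{h,k},
\end{equation*}
where I have used $(\dot u,\chi)+a(u,\chi)-f(\chi)=\sum_\gamma\int_\gamma\alpha\nabla u\cdot n[\![\chi]\!]\,d\tau$ together with $a(u-P_{h,k}u,\chi)=\sum_\gamma\int_\gamma\alpha\nabla u\cdot n[\![\chi]\!]\,d\tau$. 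This is the standard Wheeler-type trick; the point of the modified projection is exactly that it absorbs the nonconforming interface contributions so that $\theta$ satisfies a clean conforming parabolic identity on $V_{h,k}$.

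For the $L^2$ estimate \eqref{h19}, I would take $\chi=\theta$ in the $\theta$-equation, use coercivity \eqref{h71} to drop $a(\theta,\theta)\ge c\|\theta\|_X^2\ge 0$, and obtain $\tfrac12\frac{d}{dt}\|\theta\|_{L^2}^2\le \|\dot\rho\|_{L^2}\|\theta\|_{L^2}$, hence $\frac{d}{dt}\|\theta\|_{L^2}\le\|\dot\rho\|_{L^2}$. Integrating in time gives $\|\theta(t)\|_{L^2}\le\|\theta(0)\|_{L^2}+\int_0^t\|\dot\rho\|_{L^2}\,ds$. The initial term $\|\theta(0)\|_{L^2}=\|P_{h,k}u_0-u_{0,h,k}\|_{L^2}$ is bounded either by $0$ (if $u_{0,h,k}=P_{h,k}u_0$) or, if $u_{0,h,k}=I^h_k u_0$, by $\|u_0-P_{h,k}u_0\|_{L^2}+\|u_0-I^h_k u_0\|_{L^2}$, both of which are of the required order by Lemma~\ref{proj} and Lemma~\ref{lem1}. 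For the integral term I would note $\dot\rho = \dot u - \frac{d}{dt}P_{h,k}u = \dot u - P_{h,k}\dot u$ (since $P_{h,k}$ is linear and time-independent, it commutes with $\partial_t$), so $\|\dot\rho\|_{L^2}$ is estimated by \eqref{h30} with $u$ replaced by $\dot u$; integrating the bound $C h^\eta k^{-\nu}|\log(k/h)|^{1/2}(\log k)^{1/2}\sum_i\|\dot u\|_{H^\nu(\Omega_i)}$ over $(0,t)$ produces the $\|\dot u\|_{L^2(0,T;H^\nu(\Omega_i))}$ factor (and, combined with the $(\log k)^{1/2}$ from $\|\theta(0)\|$ estimation via the $L^2$-part of Lemma~\ref{proj} which carries a $(\log k)^{1/2}$ and with the $(\log k)^{1/2}$ already present in $\dot\rho$'s bound, the total $\log k$ power in \eqref{h19}). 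Finally $\|(u-u_{h,k})(t)\|_{L^2}\le\|\rho(t)\|_{L^2}+\|\theta(t)\|_{L^2}$, and $\|\rho(t)\|_{L^2}$ is bounded by \eqref{h29}; combining the two gives \eqref{h19}.

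For the $H^1$ (i.e.\ $X$-norm) estimate \eqref{h20}, the standard choice is $\chi=\dot\theta$, which yields $\|\dot\theta\|_{L^2}^2+\tfrac12\frac{d}{dt}a(\theta,\theta)=-(\dot\rho,\dot\theta)\le\tfrac12\|\dot\rho\|_{L^2}^2+\tfrac12\|\dot\theta\|_{L^2}^2$, so $\frac{d}{dt}a(\theta,\theta)\le\|\dot\rho\|_{L^2}^2$; integrating and using coercivity gives $c\|\theta(t)\|_X^2\le a(\theta(0),\theta(0))+\int_0^t\|\dot\rho\|_{L^2}^2\,ds\le M\|\theta(0)\|_X^2+\int_0^t\|\dot\rho\|_{L^2}^2\,ds$. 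The initial term $\|\theta(0)\|_X$ is handled as before via Lemma~\ref{proj} (the $X$-part) and Lemma~\ref{lem1}, contributing the factor $h^{\eta-1}k^{-(\nu-1)}$; the time-integral term, using $\dot\rho=\dot u-P_{h,k}\dot u$ and the $L^2$-part of \eqref{h30}, contributes $h^\eta k^{-\nu}$ which is higher order than $h^{\eta-1}k^{-(\nu-1)}$ and hence absorbed. Then $\|(u-u_{h,k})(t)\|_X\le\|\rho(t)\|_X+\|\theta(t)\|_X$, with $\|\rho(t)\|_X$ bounded by the $X$-part of \eqref{h29}, giving \eqref{h20} with the stated powers of $\log k$ and $|\log(k/h)|$.

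The main obstacle is bookkeeping rather than any deep estimate: one must (i) verify that the interface consistency terms cancel exactly, which depends on the precise definition \eqref{h7} of the modified projection and on the fact that $[\![\chi]\!]$ makes sense for $\chi\in V_{h,k}$; (ii) track carefully the various logarithmic factors coming from Lemma~\ref{lem3} ($(\log k)^{1/2}$), Lemma~\ref{nonconf} ($|\log(k/h)|^{1/2}$), and Lemma~\ref{proj} (which compounds both), making sure the $\log k$ in \eqref{h19} versus $(\log k)^{1/2}$ in \eqref{h20} are obtained correctly — the extra $(\log k)^{1/2}$ in the $L^2$ estimate arises because the $L^2$-part of Lemma~\ref{proj} itself already carries a $(\log k)^{1/2}$ handicap relative to its $X$-part; and (iii) justify $P_{h,k}\dot u=\tfrac{d}{dt}P_{h,k}u$ and the requisite regularity $\dot u_i(t)\in H^\nu(\Omega_i)$, which is exactly the hypothesis imposed in Lemma~\ref{proj} and should be carried over to the theorem (implicitly through the $L^2(0,T;H^\nu)$ norm of $\dot u$ appearing on the right-hand side).
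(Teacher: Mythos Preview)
Your proposal is correct and follows essentially the same route as the paper: the same splitting $u-u_{h,k}=\rho+\theta$ via the modified elliptic projection, the same $\theta$-equation $(\dot\theta,\chi)+a(\theta,\chi)=-(\dot\rho,\chi)$, and the same test-function choices $\chi=\theta$ for the $L^2$ bound and $\chi=\dot\theta$ for the $X$-norm bound. The only cosmetic differences are that the paper carries the $L^2$ energy estimate in squared form via Young's inequality (rather than your direct $\tfrac{d}{dt}\|\theta\|_{L^2}\le\|\dot\rho\|_{L^2}$) and bounds $\|\rho(t)\|_{L^2}$ by writing $\rho(t)=\rho(0)+\int_0^t\dot\rho$ rather than applying Lemma~\ref{proj} pointwise in $t$; neither change affects the argument or the final estimates.
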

\begin{proof}
Writing $u-u_{h,k}=\underbrace{u-P_{h,k}u}+\underbrace{P_{h,k}u-u_{h,k}}=\rho+\theta.$ From lemma
\ref{proj}, estimates for $\rho$ are known. So it is enough to estimate $\theta$. From \eqref{h34},
\eqref{h7} and \eqref{h13}, we obtain
\begin{equation}\label{h15}
(\dot{\theta},\chi)+a(\theta,\chi)=-(\dot{\rho},\chi),\hspace{0.2cm}\forall\chi\in V_{h,k}.
\end{equation}
Replacing $\chi$ with $\theta$ in \eqref{h15}, applying coercivity \eqref{h71} of $a(\cdot,\cdot)$ and
using Young's inequality $ab\leq\frac{\epsilon}{2}a^{2}+\frac{1}{2\epsilon}b^{2},a,b,\epsilon>0$, we arrive at
\begin{align}
\frac{1}{2}\frac{d}{dt}{||\theta||}^{2}_{L^{2}(\Omega)}+
c{||\theta||}^{2}_{X}&\leq{||\dot{\rho}||}_{L^{2}(\Omega)}{||\theta||}_{L^{2}(\Omega)}\nonumber\\
&\leq {||\dot{\rho}||}_{L^{2}(\Omega)}{||\theta||}_{X}\nonumber\\
&\leq\frac{1}{2c}{||\dot{\rho}||}^{2}_{L^{2}(\Omega)}+\frac{c}{2}{||\theta||}^{2}_{X}.\nonumber
\end{align}
Hence $$\frac{d}{dt}{||\theta||}^{2}_{L^{2}(\Omega)}+c{||\theta||}^{2}_{X}\leq\frac{1}{c}
{||\dot{\rho}||}^{2}_{L^{2}(\Omega)}.$$
Integrating from $0$ to $t$, we find
\begin{equation}\label{h16}
{||\theta(t)||}^{2}_{L^{2}(\Omega)}\leq{||\theta(0)||}^{2}_{L^{2}(\Omega)}+\frac{1}{c}\int^{t}_{0}{||\dot{\rho}||}^{2}_{L^{2}(\Omega)}ds. 
\end{equation}
If $u_{0,h,k}=P_{h,k}u_{0}$, then $\theta(0)=0$, otherwise with $u_{0,h,k}=I^{h}_{k}u_{0},$
\begin{align}
{||\theta(0)||}_{L^{2}(\Omega)}
&={||P_{h,k}u_{0}-u_{0,h,k}||}_{L^{2}(\Omega)}\nonumber\\
&\leq {||u_{0}-I^{h}_{k}u_{0}||}_{L^{2}(\Omega)}+{||P_{h,k}u_{0}-u_{0}||}_{L^{2}(\Omega)}\nonumber\\
&\leq C~h^{\eta}~k^{-\nu}\log k\bigg|\log\frac{k}{h}\bigg|^{1/2}\sum^{n_{0}}_{i=1}{||u_{0}||}_{H^{\nu}(\Omega_{i})}\label{h17}.
\end{align}
For the second term on the right hand side of \eqref{h16}, we apply Lemma \ref{proj} to obtain
\begin{equation}\label{h18}
{||\dot{\rho}||}_{L^{2}(\Omega)}={||\dot{u}-P_{h,k}\dot{u}||}_{L^{2}(\Omega)}\leq C~h^{\eta}~k^{-\nu}\log k\bigg|\log\frac{k}{h}\bigg|^{1/2}
\sum^{n_{0}}_{i=1}{||\dot u||}_{H^{\nu}(\Omega_{i})}.
\end{equation}
Substituting \eqref{h17} and \eqref{h18} in \eqref{h16}, we find that
\begin{align}\label{hhh1}
&{||\theta(t)||}^{2}_{L^{2}(\Omega)}\nonumber\\
&\hspace{0.5cm}\leq C~h^{2\eta}k^{-2\nu}(\log k)^{2}\bigg|\log\frac{k}{h}\bigg|\sum^{n_{0}}_{i=1}
\bigg({||u_{0}||}^{2}_{H^{\nu}(\Omega_{i})}
+\int^{t}_{0}{||\dot{u}||}^{2}_{H^{\nu}(\Omega_{i})}ds\bigg).
\end{align}
Since for a function $\varphi$,
\begin{equation}\label{h45}
\varphi(t)=\varphi(0)+\int^{t}_{0}\dot{\varphi}(s)ds,
\end{equation}
we have 
\begin{equation}
{||\rho(t)||}_{L^{2}(\Omega)}\leq C\left({||\rho(0)||}_{L^{2}(\Omega)}+\int^{t}_{0}{||\dot{\rho}(s)||}_{L^{2}(\Omega)}ds\right).\label{hh2}
\end{equation}
Using \eqref{h18}, \eqref{hhh1}, \eqref{hh2},   Lemma \ref{proj} and triangle inequality, \eqref{h19} follows.
For a bound in $X$-norm, substitute $\chi=\dot{\theta}$ in \eqref{h15} and applying Cauchy-Schwarz inequality, we obtain
\begin{equation}
{||\dot{\theta}||}_{L^{2}(\Omega)}^{2}+\frac{1}{2}\frac{d}{dt}
a(\theta,\theta)\leq{||\dot{\rho}||}_{L^{2}(\Omega)}{||\dot{\theta}||}_{L^{2}(\Omega)}
\leq\frac{1}{2}{||\dot{\rho}||}^{2}_{L^{2}(\Omega)}+\frac{1}{2}{||\dot{\theta}||}^{2}_{L^{2}(\Omega)}
\end{equation}
and hence
\begin{equation}\label{h21}
{||\dot{\theta}||}^{2}_{L^{2}(\Omega)}+\frac{d}{dt}a(\theta,\theta)\leq{||\dot{\rho}||}^{2}_{L^{2}(\Omega)}.
\end{equation}
Integrating both side of \eqref{h21} from $0$ to $t$, using boundedness \eqref{h70} and coercivity
 \eqref{h71}  of $a(\cdot,\cdot)$,   we arrive at
\begin{equation}\label{h89}
{||\theta(t)||}^{2}_{X}\leq C(c)\bigg({||\theta(0)||}^{2}_{X}+\int^{t}_{0}{||\dot{\rho}||}^{2}_{L^{2}
(\Omega)}ds\bigg).
\end{equation}
Similarly,  with $u_{0,h,k}=I^{h}_{k}u_{0}$
\begin{align}
{||\theta(0)||}_{X}&={||u_{0}-I^{h}_{k}u_{0}||}_{X}+{||P_{h,k}u_{0}-u_{0}||}_{X}\\
&\leq C~h^{\eta-1}~k^{-(\nu-1)}(\log k)^{1/2}\bigg|\log\frac{k}{h}\bigg|^{1/2}\sum^{n_{0}}_{i=1}{||u_{0}||}_{H^{\nu}(\Omega_{i})}.\label{h88}
\end{align}
Finally, using  \eqref{h88} and  Lemma \ref{proj}, from \eqref{h89}, we find
\begin{align}\label{h35}
{||\theta(t)||}^{2}_{X}\leq C\sum^{n_{0}}_{i=1}\bigg(&h^{2(\eta-1)}~k^{-2(\nu-1)}\log k\bigg|\log\frac{k}{h}\bigg|^{1/2}{||u_{0}||}^{2}_{H^{\nu}(\Omega_{i})}\nonumber\\
&+h^{2\eta}k^{-2\nu}(\log k)^{2}\bigg|\log\frac{k}{h}\bigg|\int^{t}_{0}{||\dot{u}||}^{2}_{H^{\nu}(\Omega_{i})}ds\bigg).
\end{align}
Hence, \eqref{h20} follows.
\end{proof}
\begin{remark}
Taking $u_{0,h,k}=P_{h,k}u_{0}$, we have $\theta(0)=0$, and from \eqref{h35}, we have the following superconvergence
property of $\theta$:
\begin{equation*}
{||\theta(t)||}^{2}_{X}\leq C~h^{2\eta}~k^{-2\nu}(\log k)^{2}\bigg|\log\frac{k}{h}\bigg|\sum^{n_{0}}_{i=1}\int^{t}_{0}{||\dot{u}||}^{2}_{H^{\nu}(\Omega_{i})}ds.
\end{equation*}
\end{remark}
\begin{remark}
Note that for a geometrically conforming partition of the domain, \eqref{h20} becomes
\begin{equation*}
{||(u-u_{h,k})(t)||}_{X}\leq C~h^{\eta-1}~k^{-(\nu-1)}(\log k)^{1/2}\sum^{n_{0}}_{i=1}\big({||u_{0}||}_{H^{\nu}(\Omega_{i})}+{||\dot{u}||}_{L^{2}(0,T;H^{\nu}(\Omega_{i}))}\big).
\end{equation*}
\end{remark}
\begin{remark}
A right combination of nonquasiuniform meshes and the polynomial degree  may lead to exponential decay in the spatial error even in the presence of singularity in the domain (cf. \cite{suri}). 
\end{remark}
\section{Superconvergence estimates in negative norms}
\label{ch33}
In the next lemma we derive a negative norm estimate for $\nu>2$ which is analogous to theorem 5.1 of \cite{thomee}.
\begin{lemma}
\label{negp}
Assume $u$ satisfies the hypothesis of Lemma \ref{proj}. Then the following negative estimate holds:
\begin{equation*}
{||u-P_{hk}u||}_{H^{-s}(\Omega)}\leq C~h^{\eta+s}k^{-(q+s)}\log k\bigg|\log\frac{k}{h}\bigg|^{1/2}{||u||}_{H^{q}(\Omega)},
\end{equation*}
for $0\leq s\leq \nu-2,~3/2< q\leq \nu$, where $\eta=\min\{q,k+1\}$.
\end{lemma}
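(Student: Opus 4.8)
The plan is to use a duality (Aubin--Nitsche) argument, exploiting the defining relation \eqref{h7} of the modified elliptic projection $P_{h,k}$. Fix $s$ with $0\le s\le\nu-2$ and recall
\[
{||u-P_{h,k}u||}_{H^{-s}(\Omega)}=\sup_{0\neq g\in H^{s}(\Omega)}\frac{(u-P_{h,k}u,g)}{{||g||}_{H^{s}(\Omega)}}.
\]
For a given $g\in H^{s}(\Omega)$ I would introduce the auxiliary elliptic problem: find $\phi\in H^{1}_{0}(\Omega)$ with $-\nabla\cdot(\alpha\nabla\phi)=g$ in $\Omega$, whose weak form reads $a(v,\phi)=(g,v)$ for all $v\in H^{1}_{0}(\Omega)$. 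Standard elliptic regularity (with the smoothness of $\alpha$ assumed in Section 3) gives $\phi_{i}\in H^{s+2}(\Omega_{i})$ with $\sum_{i}{||\phi||}_{H^{s+2}(\Omega_{i})}\le C{||g||}_{H^{s}(\Omega)}$; note $s+2\le\nu$ so Lemma \ref{proj} applies to $\phi$ as well.

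\textbf{Key steps.} First, since the continuous solution $u$ need not lie in $H^{1}_{0}(\Omega)$-compatible discrete spaces, I split $(u-P_{h,k}u,g)=a(u-P_{h,k}u,\phi)-\big(\text{interface terms coming from }\phi\big)$: more precisely, testing the equation for $\phi$ against $u-P_{h,k}u$ and integrating by parts subdomain-wise, $(g,u-P_{h,k}u)=a(u-P_{h,k}u,\phi)-\sum_{\gamma\in Z}\int_{\gamma}\alpha\nabla\phi\cdot n[\![u-P_{h,k}u]\!]\,d\tau$. Because $u\in H^{1}_{0}(\Omega)$ its jumps vanish, so the interface sum reduces to $+\sum_{\gamma}\int_{\gamma}\alpha\nabla\phi\cdot n[\![P_{h,k}u]\!]\,d\tau$. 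Second, in $a(u-P_{h,k}u,\phi)$ insert $Q_{h,k}\phi\in V_{h,k}$ (from \eqref{h79}) and use \eqref{h7} with $\chi=Q_{h,k}\phi$:
\[
a(u-P_{h,k}u,\phi)=a(u-P_{h,k}u,\phi-Q_{h,k}\phi)+\sum_{\gamma\in Z}\int_{\gamma}\alpha\nabla u\cdot n[\![Q_{h,k}\phi]\!]\,d\tau.
\]
Third, estimate each piece: the term $a(u-P_{h,k}u,\phi-Q_{h,k}\phi)\le M{||u-P_{h,k}u||}_{X}{||\phi-Q_{h,k}\phi||}_{X}$ is bounded via \eqref{h70}, \eqref{h29} for $u$, and Lemma \ref{lem4} for $\phi$ (with regularity index $s+2$), producing $h^{\eta}k^{-q}\cdot h^{s+1}k^{-(s+1)}$ up to logarithmic factors. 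The interface terms are handled by Lemma \ref{nonconf}: for $\sum_{\gamma}\int_{\gamma}\alpha\nabla u\cdot n[\![Q_{h,k}\phi]\!]$ note $Q_{h,k}\phi\in V_{h,k}$, so \eqref{h76} applies with $w_{h,k}=Q_{h,k}\phi$, giving $h^{\eta-1}k^{-(q-1)}|\log(k/h)|^{1/2}{||Q_{h,k}\phi||}_{X}$; since ${||Q_{h,k}\phi||}_{X}\le C\sum_{i}{||\phi||}_{H^{s+2}(\Omega_{i})}$ one still needs the extra $h^{s+1}k^{-(s+1)}$ factor, which is recovered by subtracting a best approximation of $\alpha\nabla u\cdot n$ of the appropriate order inside the consistency estimate — i.e.\ re-running the proof of Lemma \ref{nonconf} while also using approximation of $[\![Q_{h,k}\phi]\!]=[\![Q_{h,k}\phi-\phi]\!]$ on $\gamma$. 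Symmetrically, $\sum_{\gamma}\int_{\gamma}\alpha\nabla\phi\cdot n[\![P_{h,k}u]\!]$ is estimated by swapping the roles of $u$ and $\phi$: here $P_{h,k}u\in V_{h,k}$, so \eqref{h76}-type bound with $w_{h,k}=P_{h,k}u$ and data $\alpha\nabla\phi\cdot n$ (regularity $s+2$) yields $h^{(s+2)-1}k^{-((s+2)-1)}|\log(k/h)|^{1/2}{||P_{h,k}u||}_{X}$, and ${||P_{h,k}u||}_{X}\le{||u||}_{X}+{||u-P_{h,k}u||}_{X}\le C\sum_i{||u||}_{H^{q}(\Omega_i)}$ after absorbing the lower-order term, leaving exactly $h^{s+1}k^{-(s+1)}\cdot h^{\eta-?}$ — here one uses that $[\![P_{h,k}u]\!]=[\![P_{h,k}u-u]\!]$ has size controlled through Lemma \ref{lem3}/\eqref{h81}, supplying the needed extra $h^{\eta}k^{-q}$ type factor instead. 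Collecting all contributions gives $h^{\eta+s}k^{-(q+s)}$ times the logarithmic factors $\log k\,|\log(k/h)|^{1/2}$, divide by ${||g||}_{H^{s}(\Omega)}$ and take the supremum.

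\textbf{Main obstacle.} The delicate point is bookkeeping the powers of $h$ and $k$ on the two interface sums so that they genuinely gain the full $h^{s}k^{-s}$ over the $X$-norm estimate of Lemma \ref{proj}, rather than only half of it. Each interface integral must be split so that \emph{both} factors in the duality pairing are replaced by approximation errors (the flux $\alpha\nabla u\cdot n$ resp.\ $\alpha\nabla\phi\cdot n$ against its $S^{NM}_{h,k}$-best approximation, \emph{and} the jump $[\![\cdot]\!]$ of the discrete function, which equals the jump of the approximation error since the true functions are conforming). Getting the $\epsilon$-optimization in Lemma \ref{nonconf} ($\epsilon=(\log(k/h))^{-1}$) to reproduce the single $|\log(k/h)|^{1/2}$ factor consistently across both sums, and checking the constraint $s\le\nu-2$ is exactly what makes $s+2\le\nu$ so that $\phi$ has enough regularity for Lemma \ref{lem4} and Lemma \ref{nonconf}, is where the restriction $\nu>2$ enters and where care is required. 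The rest is an assembly of \eqref{h70}, \eqref{h29}, Lemmas \ref{lem4} and \ref{nonconf}, following the template of Theorem 5.1 in \cite{thomee}.
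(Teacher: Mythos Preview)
Your proposal is correct and follows essentially the same approach as the paper: a duality argument with an auxiliary elliptic problem having $H^{s+2}$ regularity, the identical three-term decomposition (bulk term $a(u-P_{h,k}u,\phi-Q_{h,k}\phi)$, the term $a(u-P_{h,k}u,Q_{h,k}\phi)$ rewritten via \eqref{h7} as an interface sum in $\alpha\nabla u\cdot n$, and the interface sum in $\alpha\nabla\phi\cdot n$ coming from the integration by parts), and the same mechanism of subtracting $\psi\in S^{NM}_{h,k}(\gamma)$ and using $[\![Q_{h,k}\phi]\!]=[\![Q_{h,k}\phi-\phi]\!]$, $[\![P_{h,k}u]\!]=[\![P_{h,k}u-u]\!]$ to gain the extra $h^{s}k^{-s}$. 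The paper's proof is in fact much terser than yours---after writing the decomposition it simply says ``proceeding as in Lemma~\ref{proj} and using \eqref{regm}''---so your expanded bookkeeping and your identification of the constraint $s\le\nu-2$ as precisely what ensures $s+2\le\nu$ for the dual solution are consistent with, and more explicit than, what the paper records.
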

\begin{proof}
Consider for $i=1,\cdots,n_0$, $z_{i}\in H^{s+2}(\Omega_i)\cap H^{1}_{D}(\Omega_i)$ such that
\begin{align}
-\nabla\cdot(\alpha_i(x)\nabla z_{i})&=\phi_{i}\hspace{0.2cm}\text{in}\hspace{0.2cm}\Omega_{i},\label{h5}\\
z_{i}&=0\hspace{0.2cm}\text{on}\hspace{0.2cm}\partial\Omega_{i}\cap\partial\Omega,\nonumber\\
[\![z]\!]=0,~[\![\alpha\nabla z\cdot n]\!]&=0\hspace{0.2cm}\text{along}\hspace{0.2cm}\Gamma,\nonumber
\end{align}
with the regularity condition
\begin{equation}\label{regm}
\sum^{n_{0}}_{i=1}{||z||}_{H^{s+2}(\Omega_{i})}\leq C{||\phi||}_{H^{s}(\Omega)}.
\end{equation}
Multiplying  \eqref{h5} with  $u-P_{hk}u$ and using Green's formula, we find 
\begin{align*}
(u-P_{hk}u,\phi)
&=-\sum^{n_{0}}_{i=1}\int_{\Omega_{i}}(u-P_{hk}u)\nabla\cdot(\alpha_i(x)\nabla z_i)dx\\
&=a(u-P_{hk}u,z-Q_{hk}z)+a(u-P_{hk}u,Q_{hk}z)\\
&\hspace{0.5cm}-\sum_{\gamma\in Z}\int_{\gamma}(\alpha\nabla z\cdot n-\psi){[\![u-P_{hk}u]\!]}d\tau,~\psi\in S^{NM}_{h,k}(\gamma).
\end{align*}
Proceeding as in Lemma \ref{proj} and using the condition \eqref{regm}, the lemma follows.
\end{proof}
Let the solution operator of a self-adjoint elliptic problem
\begin{align}
-\nabla\cdot(\alpha(x)\nabla u(x))&=f(x)\hspace{0.3cm}\text{in}~~~\Omega,\label{ell1}\\
u(x)&=0\hspace{0.3cm}\text{on}~~~\partial\Omega,\label{ell2}
\end{align}
be $A:L^{2}(\Omega)\rightarrow H^{1}_{0}(\Omega)$ such that $u=Af$ is the  solution of \eqref{ell1}-\eqref{ell2}. We note that, $A$ is a 
self-adjoint operator with respect to $(\cdot,\cdot)$ by the self-adjoint property of the bilinear form  $a(\cdot,\cdot)$,
that is $$(f,Ag)=a(Af,Ag)=(Af,g).$$
Also $A$ is positive definite on $L^{2}(\Omega)$: For, since  $(f,Af)=a(Af,Af)\geq c{||Af||}^{2}_{H^{1}(\Omega)}\geq 0$, for $c>0$, $(f,Af)=0$ implies  $f=-\nabla\cdot(\alpha(x)\nabla Af)=0$.
Now we can define an alternative negative norm  by
\begin{equation}\label{def1}
{|v|}_{-s}={||A^{s/2}v||}_{L^{2}(\Omega)}=(A^{s}v,v)^{1/2},\hspace{0.2cm}\text{for}\hspace{0.2cm}s\geq 0,
\end{equation}
which is equivalent  to the negative norm introduced 
earlier and is more convenient to use for the analysis of parabolic problems. 
The following lemma can be found in  \cite[page 71]{thomee}.
\begin{lemma}
\label{eqv}
For a non-negative integer $s$, the norms ${|\cdot|}_{-s}$ and ${||\cdot||}_{H^{-s}(\Omega)}$ are equivalent.
\end{lemma}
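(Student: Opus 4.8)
\textbf{Proof proposal for Lemma~\ref{eqv}.}
The plan is to establish the two-sided inequality
$c_1\,{||v||}_{H^{-s}(\Omega)}\leq {|v|}_{-s}\leq c_2\,{||v||}_{H^{-s}(\Omega)}$
for all $v\in L^2(\Omega)$, by exploiting the spectral structure of the solution operator $A$. First I would recall that $A:L^2(\Omega)\to L^2(\Omega)$ is compact, self-adjoint and positive definite (as already noted in the excerpt, with $(f,Af)=a(Af,Af)\geq c{||Af||}^2_{H^1(\Omega)}$), so by the spectral theorem there is an orthonormal basis $\{\varphi_j\}_{j\geq 1}$ of $L^2(\Omega)$ consisting of eigenfunctions, $A\varphi_j=\lambda_j\varphi_j$ with $\lambda_j>0$ and $\lambda_j\to 0$. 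Equivalently the $\varphi_j$ are the eigenfunctions of the elliptic operator $L u=-\nabla\cdot(\alpha\nabla u)$ with $L\varphi_j=\mu_j\varphi_j$, $\mu_j=\lambda_j^{-1}\to\infty$. For $v=\sum_j c_j\varphi_j\in L^2(\Omega)$ we then have the explicit formula ${|v|}_{-s}^2=(A^s v,v)=\sum_j \lambda_j^{s} c_j^2=\sum_j \mu_j^{-s} c_j^2$.

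Next I would identify the Hilbert scale generated by $L$: for a nonnegative integer $s$ one has the norm equivalence ${||w||}_{H^{s}(\Omega)}\simeq \big(\sum_j \mu_j^{s} d_j^2\big)^{1/2}$ for $w=\sum_j d_j\varphi_j$ lying in the appropriate domain, since $\alpha$ is smooth and $\Omega$ is such that elliptic regularity $\sum_i{||z||}_{H^{s+2}(\Omega_i)}\le C{||\phi||}_{H^s(\Omega)}$ holds (this is exactly the regularity hypothesis~\eqref{regm} invoked elsewhere in the paper). Dualizing this equivalence: by definition ${||v||}_{H^{-s}(\Omega)}=\sup_{0\neq\phi\in H^s(\Omega)}\frac{(v,\phi)}{{||\phi||}_{H^s(\Omega)}}$, and writing $\phi=\sum_j d_j\varphi_j$, the pairing is $(v,\phi)=\sum_j c_j d_j$, so by Cauchy--Schwarz (optimized over the $d_j$) the supremum equals $\big(\sum_j \mu_j^{-s}c_j^2\big)^{1/2}$ up to the constants coming from the $H^s$-norm equivalence. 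Comparing with the formula for ${|v|}_{-s}$ above gives the claimed equivalence, with constants depending only on $s$, $\Omega$, and the bounds $m_l,m_u$ on $\alpha$.

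A cleaner alternative I would mention is the operator-theoretic route: one shows $A:L^2(\Omega)\to L^2(\Omega)$ extends/restricts to an isomorphism $A:H^{s}(\Omega)\cap(\text{bdry conditions})\to H^{s+2}(\Omega)\cap\cdots$ and dually $A:H^{-s-2}\to H^{-s}$, again by elliptic regularity together with a duality argument; then ${|v|}_{-s}={||A^{s/2}v||}_{L^2}$ compares with ${||v||}_{H^{-s}}$ because $A^{s/2}$ shifts the scale by exactly $s$. Since $s$ is assumed a nonnegative integer, no interpolation is needed and $A^{s/2}$ is handled by iterating $A$ when $s$ is even and combining with one factor of $A^{1/2}$ (whose mapping properties follow from the $s=1$ case, i.e. $a(\cdot,\cdot)$ being equivalent to the $H^1_0$ inner product) when $s$ is odd.

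The main obstacle is making the two norm equivalences on the positive side precise with the subdomain-broken regularity used in the paper: the elliptic problem \eqref{h5} has the transmission conditions $[\![z]\!]=0$, $[\![\alpha\nabla z\cdot n]\!]=0$ along $\Gamma$, so "$H^s(\Omega)$" on the smooth side must really be read as the domain of the (globally defined) operator $L$, and one must check that this domain, characterized via \eqref{regm}, does coincide with the spaces appearing in the definition of ${||\cdot||}_{H^{-s}(\Omega)}$ for the integer values of $s$ in play. Everything else — the spectral decomposition, the Cauchy--Schwarz optimization in the duality pairing, and the bookkeeping of constants — is routine, so I would carry out the spectral-theorem argument in two or three lines and simply cite Thomée \cite{thomee} for the standard details, exactly as the excerpt already does.
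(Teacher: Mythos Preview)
Your proposal is correct, and in fact it goes well beyond what the paper does: the paper supplies no proof of this lemma at all, merely the one-line citation ``can be found in \cite[page 71]{thomee}.'' Your spectral-decomposition sketch (eigenbasis of $A$, the identity ${|v|}_{-s}^2=\sum_j\mu_j^{-s}c_j^2$, the equivalence ${||\phi||}_{H^s}\simeq(\sum_j\mu_j^s d_j^2)^{1/2}$ on the domain of $L^{s/2}$ via elliptic regularity, and dualization by Cauchy--Schwarz) is exactly the argument Thom\'ee gives, and your caveat about matching the supremum space in the definition of ${||\cdot||}_{H^{-s}(\Omega)}$ with the actual domain of the fractional powers is the one genuine technical point---Thom\'ee handles it by working on the scale $\dot H^s$ (the space the paper introduces just after this lemma in \eqref{spec1}) rather than the full $H^s(\Omega)$. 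Since you already propose to cite \cite{thomee} for the details, your write-up is entirely consistent with the paper's treatment and more informative than it.
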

Let $A_{hk}:L^{2}(\Omega)\rightarrow V_{h,k}$ such that $u_{h,k}=A_{hk}f$ is the mortar approximate solution to the problem \eqref{ell1}-\eqref{ell2} which is also a self-adjoint operator with respect to $(\cdot,\cdot)$: i.e.,
$$(f,A_{hk}g)=a(A_{hk}f,A_{hk}g)=(A_{hk}f,g)~\forall~f,g\in L^{2}(\Omega).$$
We define a discrete negative semi-norm on $L^2(\Omega)$ as
\begin{equation}\label{def2}
{|v|}_{-s,hk}={||A^{s/2}_{hk}v||}_{L^{2}(\Omega)}=(A^{s}_{hk}v,v)^{1/2},\hspace{0.2cm}\text{for}\hspace{0.2cm}s\geq 0,
\end{equation}
which correspond to discrete semi-inner product	 ${(v,w)}_{-s,hk}=(A^{s}_{hk}v,w)$. Note that $A_{hk}$ is positive semidefinite on $L^{2}(\Omega)$, i.e., from \eqref{h71}, 
\begin{equation}\label{kanha}
(A_{hk}f,f)=a(A_{hk}f,A_{hk}f)\geq c{||A_{hk}f||}^{2}_{X}\geq 0.
\end{equation}
In fact $A_{hk}$ is positive definite on $V_{h,k}$: For, assume $f_{h,k}\in V_{h,k}$ such that $$(A_{hk}f_{h,k},f_{h,k})=0.$$ Then from  \eqref{kanha}, $A_{hk}f_{h,k}=0$  implies ${||f_{h,k}||}^{2}_{L^{2}(\Omega)}=a(A_{hk}f_{h,k},f_{h,k})=0$ and $f_{h,k}=0$.
Hence  ${|v|}_{-s,hk}$ and ${(v,w)}_{-s,hk}$ defines a norm and an inner product on $V_{h,k}$ respectively.

The following lemma is an immediate consequence of  Lemma \ref{negp} and Lemma \ref{eqv}.
\begin{lemma}
\label{neproj2}
For $0\leq s\leq \nu-2,~2\leq q\leq \nu$, there exists a positive constant $C$ independent of $h$ and $k$ such that
\begin{align}
{|Af-A_{hk}f|}_{-s}&={|Af-P_{hk}Af|}_{-s}\nonumber\\
&={|u-P_{hk}u|}_{-s}\nonumber\\
&\leq C~h^{\eta+s}k^{-(q+s)}\log k\bigg|\log\frac{k}{h}\bigg|^{1/2}{||u||}_{H^{q}(\Omega)},\nonumber\\
&\leq C~h^{\eta+s}k^{-(q+s)}\log k\bigg|\log\frac{k}{h}\bigg|^{1/2}{||f||}_{H^{q-2}(\Omega)},\label{negm2}
\end{align}
where $\eta=\min\{q,k+1\}$.
\end{lemma}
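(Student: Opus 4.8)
The plan is to combine the negative-norm estimate of Lemma~\ref{negp} with the equivalence of norms from Lemma~\ref{eqv}, after first establishing the chain of identities in the statement. The two equalities
\[
{|Af-A_{hk}f|}_{-s}={|Af-P_{hk}Af|}_{-s}={|u-P_{hk}u|}_{-s}
\]
are essentially definitional. Setting $u=Af$, the last equality is just the substitution $u=Af$. For the first equality it suffices to show $A_{hk}f=P_{hk}(Af)$, i.e.\ that the mortar approximation of the elliptic problem \eqref{ell1}--\eqref{ell2} coincides with the modified elliptic projection of the exact solution. This follows by comparing the defining relations: $u_{h,k}=A_{hk}f$ satisfies $a(u_{h,k},\chi)=f(\chi)$ for all $\chi\in V_{h,k}$ (the mortar formulation of the stationary problem, analogous to \eqref{h34} without the time derivative), while $P_{hk}u$ satisfies \eqref{h7}. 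Subtracting, and using that $u=Af$ solves \eqref{ell1}--\eqref{ell2} so that $a(u,\chi)=f(\chi)+\sum_{\gamma\in Z}\int_{\gamma}\alpha\nabla u\cdot n[\![\chi]\!]\,d\tau$ (the analogue of \eqref{h13} for the stationary problem, obtained by integration by parts), one gets $a(P_{hk}u-u_{h,k},\chi)=0$ for all $\chi\in V_{h,k}$, hence $P_{hk}u=u_{h,k}$ by coercivity \eqref{h71}.

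Once the identities are in place, the first inequality in \eqref{negm2} is obtained by applying Lemma~\ref{eqv} to pass from ${|\cdot|}_{-s}$ to ${||\cdot||}_{H^{-s}(\Omega)}$ (up to an equivalence constant absorbed into $C$), and then invoking Lemma~\ref{negp} directly with $q$ in place of the generic exponent there, noting that the hypothesis $2\le q\le\nu$ here is compatible with the range $3/2<q\le\nu$ in Lemma~\ref{negp} and that $0\le s\le\nu-2$ matches. This yields
\[
{|u-P_{hk}u|}_{-s}\le C\,h^{\eta+s}k^{-(q+s)}\log k\bigg|\log\frac{k}{h}\bigg|^{1/2}{||u||}_{H^{q}(\Omega)},
\]
with $\eta=\min\{q,k+1\}$.

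For the final inequality, the plan is to bound ${||u||}_{H^{q}(\Omega)}$ in terms of ${||f||}_{H^{q-2}(\Omega)}$ using elliptic regularity for \eqref{ell1}--\eqref{ell2}: since $\alpha$ is smooth and $\Omega$ is a bounded polygonal domain, and since $2\le q$ so that $q-2\ge 0$, the shift estimate ${||Af||}_{H^{q}(\Omega)}\le C{||f||}_{H^{q-2}(\Omega)}$ holds; here one should keep in mind that this requires the solution to possess $H^{q}$-regularity globally, which is the implicit assumption carried from Lemma~\ref{proj} (and consistent with the regularity hypothesis \eqref{regm} used in the proof of Lemma~\ref{negp}). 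Substituting this bound into the previous display gives the stated estimate.

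The main obstacle — more a point requiring care than a genuine difficulty — is the identification $A_{hk}f=P_{hk}(Af)$, which rests on recognizing that the stationary mortar problem and the modified elliptic projection are built from the same bilinear form and the same consistency term $\sum_{\gamma\in Z}\int_{\gamma}\alpha\nabla u\cdot n[\![\chi]\!]\,d\tau$; one must verify that the right-hand side of \eqref{h7} is exactly the interface defect arising when $u=Af$ is tested against $\chi\in V_{h,k}$. The elliptic regularity step is also a place to be careful, since global $H^{q}$-regularity on a polygonal domain is not automatic; however, it is covered by the standing regularity assumptions inherited from Lemma~\ref{proj}, so no new argument is needed.
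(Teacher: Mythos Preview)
Your proposal is correct and follows essentially the same route as the paper, which simply records the lemma as an immediate consequence of Lemma~\ref{negp} and Lemma~\ref{eqv}. You are more explicit than the paper in verifying the identity $A_{hk}f=P_{hk}(Af)$ (the paper only states $A_{hk}=P_{hk}A$ later, in the proof of the superconvergence theorem) and in flagging the elliptic regularity assumption behind the final inequality, but the underlying argument is the same.
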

Let  $\{\lambda_j\}^{\infty}_{j=1}$ and $\{\phi_j\}^{\infty}_{j=1}$ be the eigenvalues and orthonormal eigenfunctions
of $A^{-1}$ respectively.  We define another norm  on $\dot H^{s}(\Omega)=\{v\in H^s(\Omega):(A^{-1})^j v=0,~j<s/2\}$ which is equivalent to the standard Sobolev norm ${||\phi||}_{H^{s}(\Omega)}$ (see lemma 3.1 of \cite{thomee})  defined as: for
$s\geq 0$
\begin{equation}\label{spec1}
{|\phi|}_{s}=\big((A^{-1})^{s}\phi,\phi\big)^{1/2}=\left(\sum^{\infty}_{j=1}\lambda^{s}_{j}(\psi,\phi_j)^2\right)^{1/2}.
\end{equation}
The eigenvalues and
 eigenfunctions of the compact operator $A$ are  $\{\lambda^{-1}_j\}^{\infty}_{j=1}$ and $\{\phi_j\}^{\infty}_{j=1}$  respectively, and we have
\begin{equation}\label{spec2}
{|v|}_{-s}=(A^{s}v,v)^{1/2}=\left(\sum^{\infty}_{j=1}\lambda^{-s}_{j}(v,\phi_j)^2\right)^{1/2}.
\end{equation}
In the next lemma it is shown that the discrete negative semi-norm is equivalent to the corresponding continuous negative norm, with a mild error.
\begin{lemma}
\label{negeq}
For a non-negative integer $s$ with $0\leq s\leq \nu$, there exist positive constants $C$ independent of $h$ and $k$ such that
\begin{align}
&{|v|}_{-s,hk}\leq C\left({|v|}_{-s}+h^{s}k^s\log k\bigg|\log\frac{k}{h}\bigg|^{1/2}{||v||}_{L^{2}(\Omega)}\right),\label{neg5}\\
&{|v|}_{-s}\leq C\left({|v|}_{-s,hk}+h^{s}k^s\log k\bigg|\log\frac{k}{h}\bigg|^{1/2}{||v||}_{L^{2}(\Omega)}\right).\label{neg6}
\end{align}
\end{lemma}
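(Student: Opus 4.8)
\textbf{Proof proposal for Lemma \ref{negeq}.}
The plan is to exploit the difference between the continuous and discrete solution operators $A$ and $A_{hk}$, which is controlled by Lemma \ref{neproj2}, and to iterate it $s$ times so as to estimate $A^s - A_{hk}^s$. The key algebraic identity is the telescoping decomposition
\begin{equation*}
A^{s}-A^{s}_{hk}=\sum_{j=0}^{s-1}A^{s-1-j}(A-A_{hk})A^{j}_{hk},
\end{equation*}
valid since these operators all commute with nothing in particular but the sum telescopes regardless of ordering. Testing against $v$ and using the self-adjointness of $A$ and $A_{hk}$, one writes $\big((A^{s}-A^{s}_{hk})v,v\big)$ as a sum of terms of the form $\big((A-A_{hk})A^{j}_{hk}v,\,A^{s-1-j}v\big)$. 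Each such term is estimated by Lemma \ref{neproj2} applied with $f=A^{j}_{hk}v$: the factor $A-A_{hk}=A-P_{hk}A$ contributes $h^{\eta+s'}k^{-(q+s')}\log k\,|\log(k/h)|^{1/2}$ in an appropriate negative norm, while the remaining powers of $A$ and $A_{hk}$ are bounded operators on $L^2(\Omega)$ (recall $A$ is positive definite and compact, and $A_{hk}$ is positive semidefinite with $\|A_{hk}\|\le C$ uniformly by coercivity \eqref{h71} together with boundedness \eqref{h70}). Collecting the powers of $h$ and $k$ and using $\eta=\min\{q,k+1\}$ with $q$ chosen to make the exponent come out to $s$, one obtains
\begin{equation*}
\big|\,( (A^{s}-A^{s}_{hk})v,v)\,\big|\le C\,h^{2s}k^{2s}(\log k)^{2}\Big|\log\tfrac{k}{h}\Big|\,{||v||}^{2}_{L^{2}(\Omega)}.
\end{equation*}

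From this one derives both inequalities. For \eqref{neg5}, write ${|v|}^{2}_{-s,hk}=(A^{s}_{hk}v,v)=(A^{s}v,v)+\big((A^{s}_{hk}-A^{s})v,v\big)={|v|}^{2}_{-s}+\big((A^{s}_{hk}-A^{s})v,v\big)$, bound the last term by the displayed estimate, and take square roots using $\sqrt{a+b}\le\sqrt{a}+\sqrt{b}$ for $a,b\ge0$ (after first controlling the possibly-negative cross term by absolute values); the resulting bound is exactly ${|v|}_{-s}+Ch^{s}k^{s}\log k\,|\log(k/h)|^{1/2}{||v||}_{L^{2}(\Omega)}$ up to the constant. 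Inequality \eqref{neg6} follows symmetrically by writing ${|v|}^{2}_{-s}=(A^{s}v,v)=(A^{s}_{hk}v,v)+\big((A^{s}-A^{s}_{hk})v,v\big)$ and repeating the argument.

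The main obstacle I anticipate is bookkeeping the indices in Lemma \ref{neproj2} so that each of the $s$ telescoped terms genuinely yields the claimed $h^{2s}k^{2s}$ rate: Lemma \ref{neproj2} is stated for $0\le s'\le\nu-2$ and $2\le q\le\nu$, so one must check that when the operator $A-A_{hk}$ is sandwiched between $A^{j}_{hk}$ and $A^{s-1-j}$ the required negative-norm index (here essentially $s-1-j$ playing the role of the negative exponent and $j+2$ or so playing the role of $q$) stays within the admissible range, which forces the standing assumption $s\le\nu$ and the regularity behind Lemma \ref{proj}. A secondary technical point is justifying the uniform operator bound $\|A_{hk}\|_{L^2\to L^2}\le C$ and the mapping properties needed to make the intermediate quantities like $A^{j}_{hk}v$ legitimate arguments of Lemma \ref{neproj2}; this is routine from \eqref{h70}--\eqref{h71} and the fact that $A_{hk}$ maps into the finite-dimensional space $V_{h,k}$ on which it is positive definite, but it should be stated explicitly. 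Once the index ranges are verified, the rest is the elementary inequality manipulation sketched above.
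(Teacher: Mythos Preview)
Your telescoping idea is natural, but it has a genuine gap that is not merely index bookkeeping. To obtain the full $h^{2s}k^{-2s}$ rate from a term of the form $\big((A-A_{hk})A^{j}_{hk}v,\,A^{s-1-j}v\big)$ via Lemma~\ref{neproj2}, you would need, after fixing the negative-norm index $s'=2(s-1-j)$ so that $|A^{s-1-j}v|_{s'}=\|v\|_{L^2(\Omega)}$, to take $q=2j+2$ in \eqref{negm2}. This requires control of $\|A^{j}_{hk}v\|_{H^{2j}(\Omega)}$; but $A^{j}_{hk}v\in V_{h,k}$ is only a piecewise polynomial, and for $j\ge1$ there is no $h$-uniform bound of its $H^{2j}$-norm in terms of $\|v\|_{L^{2}(\Omega)}$. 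With the only legitimate choice $q=2$ you recover at best $h^{2(s-j)}k^{-2(s-j)}$ from the $j$-th summand, and the worst case $j=s-1$ collapses the whole sum to $O(h^{2}k^{-2})$, far short of the target. The remark that ``this is routine from \eqref{h70}--\eqref{h71}'' is therefore not right: coercivity and boundedness give $\|A_{hk}^{j}v\|_{L^2(\Omega)}\le C\|v\|_{L^2(\Omega)}$, nothing more.

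The paper circumvents this obstruction by arguing inductively on $s$ rather than telescoping. The step from $s$ to $s+1$ writes $|v|_{-(s+1),hk}=|A_{hk}v|_{-(s-1),hk}$, splits $A_{hk}v=Av-(A-A_{hk})v$, and applies the induction hypothesis to each piece together with Lemma~\ref{neproj2}. The decisive extra ingredient, absent from your plan, is a spectral interpolation inequality: for every eigenvalue $\lambda_j$ one has $\lambda_j^{-2}\le c_1(hk^{-1})^{2}+c_2\lambda_j^{-(s+1)}(hk^{-1})^{-(s-1)}$, which bounds the intermediate norm $|v|_{-2}$ by a weighted combination of $\|v\|_{L^2(\Omega)}$ and the target norm $|v|_{-(s+1)}$. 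This trade-off is precisely what manufactures the missing powers of $h/k$ that a single application of Lemma~\ref{neproj2} to $f=A_{hk}^{j}v$ cannot supply.
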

\begin{proof}
For $s=0$ the results are trivial. For the case $s=1$, from the definitions \eqref{def1} and \eqref{def2}, and the lemma  \ref{neproj2}, we find
\begin{align*}
{|v|}^{2}_{-1,hk}=(A_{hk}v,v)&=(Av,v)+((A_{hk}-A)v,v)\\
&\leq{|v|}^{2}_{-1}+{||A_{hk}v-Av||}_{L^{2}(\Omega)}{||v||}_{L^2(\Omega)}\\
&\leq{|v|}^{2}_{-1}
+C~h^2k^2\log k\bigg|\log\frac{k}{h}\bigg|{||v||}^{2}_{L^{2}(\Omega)}.
\end{align*}
Let $1\leq s\leq \nu-1$ and assume the result is true up to $s$. 

Now from the definition \eqref{def2} of the discrete negative semi-norm ${|\cdot|}_{-s,hk}$,
\begin{equation}\label{neg1}
{|v|}_{-(s+1),hk}={|A_{hk}v|}_{-(s-1),hk}\leq{|Av|}_{-(s-1),hk}+{|(A_{hk}-A)v|}_{-(s-1),hk}.
\end{equation}
By induction hypothesis
\begin{align}
{|Av|}_{-(s-1),hk}&\leq C\left({|Av|}_{-(s-1)}+h^{s-1}k^{-(s-1)}\log k\bigg|\log\frac{k}{h}\bigg|^{1/2}{||Av||}_{L^{2}(\Omega)}\right)\nonumber\\
&= C\left({|v|}_{-(s+1),hk}+h^{s-1}k^{-(s-1)}\log k\bigg|\log\frac{k}{h}\bigg|^{1/2}{|v|}_{-2}\right).\label{neg2}
\end{align}
Since, for all $j$,  $\lambda_jhk^{-1}>0$ and $$(\lambda_jhk^{-1})^2+(\lambda_jhk^{-1})^{-(s-1)}\geq c>0,$$
which implies
$$c_1(\lambda_jhk^{-1})^2+c_2(\lambda_jhk^{-1})^{-(s-1)}\geq 1$$
for some positive constants $c_1$ and $c_2$.
This further implies
\begin{equation}\label{hh1}
{\lambda_j}^{-2}\leq c_1(hk^{-1})^2+c_2\lambda_j^{-(s+1)}(hk^{-1})^{-(s-1)}.
\end{equation}
Now, using the definition \eqref{spec1} of spectral norm and \eqref{hh1}, we get
\begin{align*}
&{|v|}_{-2}\\
&~~~=\left(\sum^{\infty}_{j=1}\lambda_j^{-2}(v,\phi_j)^2\right)^{\frac{1}{2}}\\
&~~~\leq\left(\sum^{\infty}_{j=1}\bigg(c_1(hk^{-1})^2+c_2\lambda_j^{-(s+1)}(hk^{-1})^{-(s-1)}\bigg)(v,\phi_j)^2\right)^{\frac{1}{2}}\\
&~~~\leq \left(c_1(hk^{-1})^2\sum^{\infty}_{j=1}(v,\phi_j)^2\right)^{\frac{1}{2}}
+\left(c_2(hk^{-1})^{-(s-1)}\sum^{\infty}_{j=1}\lambda_j^{-(s+1)}(v,\phi_j)^2\right)^{\frac{1}{2}}\\
&~~~~=C\bigg(h^2k^{-2}{||v||}_{L^{2}(\Omega)}+h^{-(s-1)}k^{s-1}{|v|}_{-(s+1)}\bigg).
\end{align*}
Then from \eqref{neg2},
\begin{equation}\label{neg3}
{|Av|}_{-(s-1),hk}\leq C\bigg({|v|}_{-(s+1)}+h^{s+1}k^{-(s+1)}\log k\bigg|\log\frac{k}{h}\bigg|^{1/2}{||v||}_{L^{2}(\Omega)}\bigg).
\end{equation}
From induction hypothesis and Lemma \ref{neproj2}, we have
\begin{align}
{|(A-A_{hk})v|}_{-(s-1),hk}
&\leq C\bigg({|(A-A_{hk})v|}_{-(s-1)}\nonumber\\
&~~~~~~~~~+h^{s-1}k^{-(s-1)}\log k\bigg|\log\frac{k}{h}\bigg|^{1/2}{||(A-A_{hk})v||}_{L^{2}(\Omega)}\bigg)\nonumber\\
&\leq C~h^{s+1}k^{-(s+1)}\log k\bigg|\log\frac{k}{h}\bigg|^{1/2}{||v||}_{L^{2}(\Omega)}.\label{neg4}
\end{align}
The first inequality   \eqref{neg5}, follows from \eqref{neg3}, \eqref{neg4} and \eqref{neg1}. Interchanging the role of $A$ and $A_{hk}$,  second inequality follows similarly. 
\end{proof}
Let us define the discrete operator $\Delta_{h,k}:V_{h,k}\rightarrow V_{h,k}$ as follows,
\begin{equation*}
(\Delta_{h,k}u_{h,k},v_{h,k})=-a(u_{h,k},v_{h,k})\hspace{0.2cm}\forall~ v_{h,k}\in V_{h,k}.
\end{equation*}
We note that, $A_{hk}=(-{\Delta}_{h,k})^{-1}$: For 
$$(f_{h,k},v_{h,k})=a(A_{hk}f_{h,k},v_{h,k})=-(\Delta_{h,k}(A_{hk}f_{h,k}),v_{h,k})~\forall~ f_{h,k}\in V_{h,k},$$
which implies $-\Delta_{h,k}(A_{hk}f_{h,k})=f_{h,k}$ for $f_{h,k}\in V_{h,k}$.

Let $\bar{R}_{h,k}f$ be the orthogonal projection of $f$ onto $V_{h,k}$ with respect to $(\cdot,\cdot)$.  Note that $A_{hk}\bar{R}_{hk}=A_{hk}$:  that is for all $v_{h,k}\in V_{h,k}$
$$a((A_{hk}\bar{R}_{hk})f,v_{h,k})=(\bar{R}_{hk}f,v_{h,k})=(f,v_{h,k})=a(A_{hk}f,v_{h,k}).$$
With above notations, the mortar semidiscrete problem can be written as:
\begin{align*}
(\dot{u}_{h,k},v_{h,k})-(\Delta_{h,k} u_{h,k},v_{h,k})&=(\bar{R}_{hk}f,v_{h,k})\hspace{0.3cm}\forall~ v_{h,k}\in V_{h,k},\\
u_{h,k}(0)&=u_{0,h,k}
\end{align*}
that is,
\begin{equation*}
\dot{u}_{h,k}-\Delta_{h,k} u_{h,k}=\bar{R}_{hk}f
\hspace{0.2cm}\text{with}\hspace{0.2cm}
u_{h,k}(0)=u_{0,h,k}.
\end{equation*}
Since $A_{hk}=(-{\Delta}_{h,k})^{-1}$, the above semidiscrete problem can further be written  as  
\begin{equation}\label{22}
A_{hk}\dot{u}_{h,k}+u_{h,k}=A_{hk}\bar{R}_{hk}f=A_{hk}f\hspace{0.2cm}\text{with}\hspace{0.2cm}u_{h,k}(0)=u_{0,h,k}.
\end{equation}
Further, the continuous problem \eqref{h1}-\eqref{h4} can similarly be  written as (cf. \cite{thomee}, page 31) 
\begin{equation}\label{21}
A\dot{u}+u=Af\hspace{0.2cm}\text{with}\hspace{0.2cm}u(0)=u_0.
\end{equation}
Using the above discussions we have the following negative norm estimate
for the mortar solution $u_{h,k}$.
\begin{theorem}
 Let $u$ and $u_{h,k}$ be the solutions of \eqref{h1}-\eqref{h4} and \eqref{h34}-\eqref{ini} respectively. Assume $u_{0}$ and $u_{0,h,k}$ are such that
\begin{equation}\label{neg10}
{|u_{0}-u_{0,h,k}|}_{-s}+{||u_{0}-u_{0,h,k}||}_{L^{2}(\Omega)}
\leq C~h^{s+\eta}k^{-(s+\nu)}\log k\bigg|\log\frac{k}{h}\bigg|^{1/2}{||u_{0}||}_{H^{\nu}(\Omega)},
\end{equation}
where $\eta=\min\{\nu,k+1\}$ and  $0\leq s\leq\nu-2$.

Then for  a geometrically nonconforming partition of $\Omega$ there exists a positive constant $C$ independent of $h$, $k$ and
$u$, such that the following superconvergence  estimate holds for $\nu>2$ and  $0\leq s\leq \nu-2$:
\begin{equation*}
{|u(t)-u_{h,k}(t)|}_{-s}\leq C~h^{\eta+s}k^{-(\nu+s)}\log k\bigg|\log\frac{k}{h}\bigg|^{1/2}\left({||u_{0}||}_{H^{\nu}(\Omega)}
+{||\dot{u}||}_{L^{2}(0,T;H^{\nu}(\Omega))}\right).
\end{equation*}
\end{theorem}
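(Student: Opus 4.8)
The plan is to mimic the classical negative-norm superconvergence argument of Thomée (Theorem 5.1 of \cite{thomee}), transferring it to the mortar setting via the operator formulation \eqref{21}--\eqref{22} and the discrete/continuous norm equivalence of Lemma \ref{negeq}. First I would split the error as $u-u_{h,k}=(u-P_{hk}u)+(P_{hk}u-u_{h,k})=\rho+\theta$. For $\rho$, Lemma \ref{neproj2} already gives the desired bound ${|\rho(t)|}_{-s}\le C~h^{\eta+s}k^{-(\nu+s)}\log k\,|\log\tfrac{k}{h}|^{1/2}{||u(t)||}_{H^{\nu}(\Omega)}$, and integrating $\rho(t)=\rho(0)+\int_0^t\dot\rho(s)\,ds$ as in the proof of Theorem \ref{thm1} controls this in terms of ${||u_0||}_{H^\nu(\Omega)}$ and ${||\dot u||}_{L^2(0,T;H^\nu(\Omega))}$. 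So the crux is to estimate ${|\theta(t)|}_{-s}$, and by Lemma \ref{negeq} it suffices to bound the discrete seminorm ${|\theta(t)|}_{-s,hk}$ together with ${||\theta(t)||}_{L^2(\Omega)}$, the latter being already available from \eqref{hhh1} in the proof of Theorem \ref{thm1}.

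To bound ${|\theta(t)|}_{-s,hk}$, I would work entirely inside $V_{h,k}$ with the operators $A_{hk}=(-\Delta_{h,k})^{-1}$ and $\bar R_{hk}$. Applying $\bar R_{hk}$ to \eqref{21} and subtracting \eqref{22}, and using $A_{hk}\bar R_{hk}=A_{hk}$, one gets an evolution equation for $\theta$ of the form $A_{hk}\dot\theta+\theta = (A_{hk}\bar R_{hk}-\bar R_{hk}A)\dot u + (A_{hk}-A)\,(\text{something in }f)$ — more precisely $A_{hk}\dot\theta+\theta = -(\,\bar R_{hk}-I)(P_{hk}u) - (A_{hk}\bar R_{hk}-A)\dot u$ after a careful bookkeeping; the key point is that the right-hand side is, up to terms already handled by $\rho$, equal to $(A_{hk}-A)\dot u$ (or $\bar R_{hk}\dot\rho$-type terms), which Lemma \ref{neproj2} estimates in ${|\cdot|}_{-s}$ with the extra power $h^{\eta+s}k^{-(\nu+s)}$. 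Then I would test this equation in the discrete inner product ${(\cdot,\cdot)}_{-s,hk}$ against $\theta$, i.e. multiply by $A_{hk}^{s}\theta$, using self-adjointness and positive-definiteness of $A_{hk}$ on $V_{h,k}$: the $A_{hk}\dot\theta$ term produces $\tfrac12\tfrac{d}{dt}{|\theta|}^2_{-(s-1),hk}$ (or, after one integration by parts in the operator, a controllable energy), the $\theta$ term produces ${|\theta|}^2_{-s,hk}$, and Cauchy--Schwarz in the ${(\cdot,\cdot)}_{-s,hk}$ inner product together with Young's inequality absorbs the right-hand side. Integrating in time from $0$ to $t$, using the hypothesis \eqref{neg10} on $\theta(0)=P_{hk}u_0-u_{0,hk}$ (which gives the correct order via \eqref{neg5}), yields ${|\theta(t)|}_{-s,hk}\le C~h^{\eta+s}k^{-(\nu+s)}\log k\,|\log\tfrac{k}{h}|^{1/2}\big({||u_0||}_{H^\nu(\Omega)}+{||\dot u||}_{L^2(0,T;H^\nu(\Omega))}\big)$.

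Finally I would assemble: by \eqref{neg6} of Lemma \ref{negeq}, ${|\theta(t)|}_{-s}\le C\big({|\theta(t)|}_{-s,hk}+h^sk^s\log k\,|\log\tfrac{k}{h}|^{1/2}{||\theta(t)||}_{L^2(\Omega)}\big)$; the first term is bounded above, and the second is controlled by the $L^2$-bound \eqref{hhh1} for $\theta$, whose $h^{2\eta}k^{-2\nu}$ factor combines with the $h^sk^s$ to give exactly $h^{\eta+s}k^{-(\nu+s)}$ after noting that $h^sk^s\cdot h^\eta k^{-\nu}=h^{\eta+s}k^{-(\nu+s)}\cdot(hk)^{0}$ — here one uses $hk^{-1}\le $ const (or more carefully tracks that the $h^sk^s$ is the crude equivalence loss, so one should instead use that ${||\theta||}_{L^2}$ already carries $h^\eta k^{-\nu}$ and the product is $h^{\eta+s}k^{-(\nu+s)}(hk\cdot k^{-2})^{?}$; this bookkeeping must be done honestly). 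Combining the $\rho$ and $\theta$ estimates via the triangle inequality gives the stated result. The main obstacle I anticipate is precisely this last step: making the powers of $h$ and $k$ match in the equivalence loss term from Lemma \ref{negeq}, since the naive $h^sk^s$ factor goes the wrong way in $k$ — one has to exploit that ${||\theta(t)||}_{L^2}$ is itself superconvergent of order $h^\eta k^{-\nu}$ (not just the optimal $h^{\eta-1}k^{-(\nu-1)}$), which is exactly what \eqref{hhh1} provides when $\theta(0)$ is chosen compatibly, so that the product lands at $h^{\eta+s}k^{-(\nu+s)}$ up to the harmless constant $h^sk^s\cdot k^{-2s}\le 1$-type adjustments; a second delicate point is the correct identification of the right-hand side of the evolution equation for $\theta$ so that it is covered by Lemma \ref{neproj2} rather than only by the weaker elliptic estimate.
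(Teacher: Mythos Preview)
Your overall architecture---derive an operator error equation, estimate in the discrete seminorm $|\cdot|_{-s,hk}$, then transfer to $|\cdot|_{-s}$ via Lemma~\ref{negeq} and the $L^2$ bound from Theorem~\ref{thm1}---matches the paper. However, there are two genuine gaps in the execution, and the paper's route avoids both by a cleaner choice.

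\textbf{First, the error equation.} The paper does \emph{not} split $e=\rho+\theta$ at this stage; it works with $e=u-u_{h,k}$ directly. Subtracting \eqref{22} from \eqref{21} and using $A_{hk}=P_{hk}A$ gives immediately
\[
A_{hk}\dot e+e=(A-A_{hk})A^{-1}u=u-P_{hk}u=\rho,
\]
a single line with $\rho$ itself on the right. Your attempted $\theta$-equation is muddled (the correct version, from \eqref{h15}, is $A_{hk}\dot\theta+\theta=-A_{hk}\dot\rho$), and working with $e$ removes the whole ``careful bookkeeping'' you flag as a delicate point.

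\textbf{Second, the test function.} You propose multiplying by $A_{hk}^{s}\theta$. That pairing gives $(A_{hk}\dot\theta,A_{hk}^{s}\theta)=\tfrac12\tfrac{d}{dt}|\theta|^2_{-(s+1),hk}$, not $-(s-1)$ as you wrote, so after integrating you control $|\theta(t)|_{-(s+1),hk}$ pointwise and $|\theta|_{-s,hk}$ only in $L^2(0,t)$---not what is needed. The paper instead applies $A_{hk}^{s}$ to the error equation and tests with $2\dot e$: the term $2(A_{hk}^{s+1}\dot e,\dot e)\ge0$ is discarded by positive semidefiniteness on $L^2(\Omega)$, the term $2(A_{hk}^{s}e,\dot e)$ becomes $\tfrac{d}{dt}|e|^2_{-s,hk}$, and on the right one integrates by parts in time, $2(A_{hk}^{s}\rho,\dot e)=2\tfrac{d}{dt}(A_{hk}^{s}\rho,e)-2(A_{hk}^{s}\dot\rho,e)$. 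A standard sup-in-time argument then yields a pointwise bound on $|e(t)|_{-s,hk}$ in terms of $|e(0)|_{-s,hk}$, $\sup_{s\le t}|\rho(s)|_{-s,hk}$ and $\int_0^t|\dot\rho|_{-s,hk}$, each handled by Lemma~\ref{neproj2} and Lemma~\ref{negeq}.

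\textbf{On your final bookkeeping worry.} The factor in Lemma~\ref{negeq} should read $h^{s}k^{-s}$ (cf.\ \eqref{neg3}--\eqref{neg4} in its proof), not $h^{s}k^{s}$; with that, $h^{s}k^{-s}\cdot h^{\eta}k^{-\nu}=h^{\eta+s}k^{-(\nu+s)}$ and the powers match exactly, so no ``$(hk)^{?}$'' adjustment is needed.
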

\begin{proof}
Let  $e=u-u_{h,k}$. From \eqref{21} and \eqref{22}, we find
\begin{align*}
A_{hk}\dot{e}+e&=(A_{hk}\dot{u}_{h,k}+u_{h,k})-(A_{hk}\dot{u}+u)\\
&=A_{hk}f-(A\dot{u}+u)+(A-A_{hk})\dot{u}\\
&=(A-A_{hk})(\dot{u}-f)\\
&=(A-A_{hk})A^{-1}u.
\end{align*}
Now, since $A_{hk}=P_{hk}A$, we have
\begin{equation*}
A_{hk}\dot{e}+e=(A-A_{hk})A^{-1}u=u-P_{hk}u=\rho,
\end{equation*}
and
\begin{equation}\label{neg9}
A^{s+1}_{hk}\dot{e}+A^{s}_{hk}e=A^{s}_{hk}\rho.
\end{equation}
Multiplying  \eqref{neg9} with $2\dot{e}$, integrating over $\Omega$ and using definition \eqref{def2}, 
we obtain 
\begin{equation*}
2(A^{s+1}_{hk}\dot{e},\dot{e})+\frac{d}{dt}{|e|}^{2}_{-s,hk}
=2(A^{s}_{hk}\rho,\dot{e})=2\frac{d}{dt}(A^{s}_{hk}\rho,e)-2(A^{s}_{hk}\dot{\rho},e).
\end{equation*}
Further, integrating with respect to $t$ and using the fact that $A^{s+1}_{hk}$ is positive 
semidefinite in $L^2(\Omega)$, that is $(A^{s+1}_{hk}v,v)\geq 0$, we find
\begin{align*}
{|e(t)|}^{2}_{-s,hk}
&\leq {|e(0)|}^{2}_{-s,hk}+2{|\rho(t)|}_{-s,hk}{|e(t)|}_{-s,hk}
+2{|\rho(0)|}_{-s,hk}{|e(0)|}_{-s,hk}\\
&\hspace{0.5cm}+2\int^{t}_{0}{|\rho(s)|}_{-s,hk}{|e(s)|}_{-s,hk}ds\\
&\leq\sup_{s\leq t}{|e(s)|}_{-s,hk}\bigg({|e(0)|}_{-s,hk}+4\sup_{s\leq t}{|\rho(s)|}_{-s,hk}+2\int^{t}_{0}{|\dot{\rho}(s)|}_{-s,hk}ds\bigg).
\end{align*}
Assume $\tau$ such that ${|e(\tau)|}_{-s,hk}=\sup_{s\leq t}{|e(s)|}_{-s,hk}$. Then we get
\begin{align}
{|e(t)|}_{-s,hk}&\leq{|e(\tau)|}_{-s,hk}\nonumber\\
&\leq{|e(0)|}_{-s,hk}+4\sup_{s\leq t}{|\rho(s)|}_{-s,hk}
+2\int^{t}_{0}{|\dot{\rho}(s)|}_{-s,hk}ds\nonumber\\
&\leq{|e(0)|}_{-s,hk}+C\bigg({|\rho(0)|}_{-s,hk}
+\int^{t}_{0}{|\dot{\rho}(s)|}_{-s,hk}ds\bigg).\label{neg8}
\end{align}
From the assumption \eqref{neg10} and Lemma \ref{negeq}, we arrive at
\begin{equation}\label{neg11}
{|e(0)|}_{-s,hk}\leq C~h^{s+\eta}k^{-(s+\nu)}\log k\bigg|\log\frac{k}{h}\bigg|^{1/2}{||u_0||}_{H^{\nu}(\Omega)}.
\end{equation}
Now, from Lemma \ref{neproj2} and Lemma \ref{negeq}, we obtain 
\begin{equation}\label{subha}
{|\rho(s)|}_{-s,hk}\leq C~h^{s+\eta}k^{-(s+\nu)}\log k\bigg|\log\frac{k}{h}\bigg|^{1/2}{||u||}_{H^{\nu}(\Omega)}.
\end{equation}
In particular
\begin{equation}\label{neg12}
{|\rho(0)|}_{-s,hk}\leq C~h^{s+\eta}k^{-(s+\nu)}\log k\bigg|\log\frac{k}{h}\bigg|^{1/2}{||u||}_{H^{\nu}(\Omega)}.
\end{equation}
Similarly, as in \eqref{subha}, we have
\begin{equation}\label{neg13}
{|\dot{\rho}(s)|}_{-s,hk}\leq C~h^{s+\eta}k^{-(s+\nu)}\log k\bigg|\log\frac{k}{h}\bigg|^{1/2}{||\dot{u}||}_{H^{\nu}(\Omega)}.
\end{equation}
Substituting \eqref{neg11}, \eqref{neg12} and \eqref{neg13} in \eqref{neg8}, we get
\begin{equation*}
{|e(t)|}_{-s,hk}\leq C~h^{s+\eta}k^{-(s+\nu)}\log k\bigg|\log\frac{k}{h}\bigg|^{1/2}\bigg({||u_0||}_{H^{\nu}(\Omega)}
+\int^{t}_{0}{||\dot{u}||}_{H^{\nu}(\Omega)}ds\bigg).
\end{equation*}
Finally, from theorem \ref{thm1} and Lemma \ref{negeq} 
\begin{align}
{|e(t)|}_{-s}&\leq C\bigg({|e(t)|}_{-s,hk}+h^{s}k^{-s}\log k\bigg|\log\frac{k}{h}\bigg|^{1/2}{||e(t)||}_{L^{2}(\Omega)}\bigg)\nonumber\\
&\leq C~h^{s+\eta}k^{-(s+\nu)}\log k\bigg|\log\frac{k}{h}\bigg|^{1/2}\bigg({||u_0||}_{H^{\nu}(\Omega)}
+\int^{t}_{0}{||\dot{u}||}_{H^{\nu}(\Omega)}ds\bigg).\nonumber
\end{align}
Hence the theorem follows. 
\end{proof}
\begin{remark}
There are situations where we actually need these negative norm estimates, for instance:
approximation of the integral
$F(u)=\displaystyle\int_{\Omega}uvdx$,
where $v\in H^{\nu-2}(\Omega)$ by $F(u_{h,k})=\displaystyle\int_{\Omega}u_{h,k}v ~dx$, where $u$ and $u_{h,k}$ are the solution of \eqref{h43}-\eqref{h44} and \eqref{h34}-\eqref{ini} respectively,  that is 
\begin{align*}
|F(u)-F(u_{h,k})|&=|(u-u_{h,k},v)|\\
&\leq {|u-u_{h,k}|}_{-(\nu-2)}{|v|}_{\nu-2}\\
&\leq C ~h^{2\eta-2}k^{-(2\nu-2)}\log k\bigg|\log \frac{k}{h}\bigg|\left({||u_{0}||}_{H^{\nu}(\Omega)}
+{||\dot{u}||}_{L^{2}(0,T;H^{\nu}(\Omega))}\right)
\end{align*}
which gives a superconvergent error bound.
\end{remark}
\section{Error estimates for a fully discrete scheme}
Let $r$ be the time step parameter such that $N=T/r$ and $t_{n}=nr$. For a continuous function $\upsilon$ over
$[0,T]$, set the backward difference quotient as:~ $\bar{\partial}\upsilon^{n}=\frac{\upsilon^{n}-\upsilon^{n-1}}{r}$.
The backward Euler approximation is to find a function $U^{n}\in V_{h,k}$ such that
\begin{align}
(\bar{\partial}U^{n},\chi)&+a(U^{n},\chi)=f^{n}(\chi),~n\geq 1\hspace{0.2cm}\forall\chi\in V_{h,k},\label{h22}\\
&U^{0}=u_{0,h,k},\nonumber
\end{align}
where $u_{0,h,k}$ is $I^{h}_{k}u_{0}$ or $P_{h,k}u_{0}$ and $f^{n}(\chi)=\displaystyle\int_{\Omega}f(t_{n})\chi~dx$.
The above problem  can be written in a vector-matrix form
\begin{equation*}
(\tilde{A}+kB)\alpha^{n}=\tilde{A}\alpha^{n-1}+kF(t_{n}),\hspace{0.2cm}n\geq 1,
\end{equation*}
where $\tilde{A}+kB$ is positive definite. Since the matrix $\tilde{A}+kB$ is invertible, the problem \eqref{h22} has a unique solution.
\begin{theorem}
Let $u(t_{n})$ be the solution of \eqref{h13}-\eqref{h14} and $U^{n}\in V_{h,k}$ be an approximation
of $u(t)$ at $t=t_{n}$ given by \eqref{h22}. Then with $u_{0,h,k}=I^{h}_{k}u_{0}$ or $P_{h,k}u_{0}$
and  for  a geometrically nonconforming partition of $\Omega$, there exist positive constants $C$, independent of $h$, $k$ and $r$ such that
\begin{align}
&{||u(t_{n})-U^{n}||}^{2}_{L^{2}(\Omega)}\nonumber\\
&\hspace{0.3cm}\leq C\bigg[h^{2\eta}k^{-2\nu}(\log k)^{2}\bigg|\log\frac{k}{h}\bigg|\sum^{n_{0}}_{i=1}
\bigg({||u_{0}||}^{2}_{H^{\nu}(\Omega_{i})}
&+\int_{0}^{t_{n}}{||\dot{u}||}^{2}_{H^{\nu}(\Omega_{i})}ds\bigg)\nonumber\\
&\hspace{1.2cm}+r^{2}\int_{0}^{t_{n}}{||\ddot{u}||}^{2}_{L^{2}(\Omega)}ds\bigg]\label{h23}
\end{align}
and
\begin{align}
&{||u(t_{n})-U^{n}||}^{2}_{X}\nonumber\\
&\hspace{0.4cm}\leq C\bigg[h^{2(\eta-1)}k^{-2(\nu-1)}\log k\bigg|\log\frac{k}{h}\bigg|\sum^{n_{0}}_{i=1}\bigg({||u_{0}||}^{2}_{H^{\nu}(\Omega_{i})}
+\int_{0}^{t_{n}}{||\dot{u}||}^{2}_{H^{\nu}(\Omega_{i})}ds\bigg)\nonumber\\
&\hspace{1.3cm}+r^{2}\int_{0}^{t_{n}}{||\ddot{u}||}^{2}_{L^{2}(\Omega)}ds\bigg].\label{h24}
\end{align}
\end{theorem}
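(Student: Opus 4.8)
The plan is to combine the standard error-splitting for the backward Euler scheme with the semidiscrete estimates already available from Theorem \ref{thm1} and Lemma \ref{proj}. Following the elliptic-projection technique, I would write
\begin{equation*}
u(t_n)-U^n=\bigl(u(t_n)-P_{h,k}u(t_n)\bigr)+\bigl(P_{h,k}u(t_n)-U^n\bigr)=\rho^n+\theta^n,
\end{equation*}
where $\rho^n=\rho(t_n)$ is controlled in both $L^2(\Omega)$- and $X$-norms by Lemma \ref{proj} (and its time-integrated form, exactly as in \eqref{hh2}), so that all the $h^\eta k^{-\nu}$-type terms in \eqref{h23}--\eqref{h24} coming from $\rho$ are immediate. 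The real work is to bound $\theta^n\in V_{h,k}$.

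First I would derive the error equation for $\theta^n$. Subtracting the fully discrete scheme \eqref{h22} from the consistency identity \eqref{h13} evaluated at $t=t_n$, and using the defining property \eqref{h7} of $P_{h,k}$ to cancel the interface terms against $a(u(t_n)-P_{h,k}u(t_n),\chi)$, one gets for all $\chi\in V_{h,k}$
\begin{equation*}
(\bar\partial\theta^n,\chi)+a(\theta^n,\chi)=-(\bar\partial\rho^n,\chi)-(\omega^n,\chi),
\end{equation*}
where $\omega^n=\bar\partial u(t_n)-\dot u(t_n)$ is the time-discretization (truncation) error. Using the integral form of Taylor's theorem, $\omega^n=-\tfrac1r\int_{t_{n-1}}^{t_n}(s-t_{n-1})\ddot u(s)\,ds$, so $\|\omega^n\|_{L^2(\Omega)}\le C r^{1/2}\bigl(\int_{t_{n-1}}^{t_n}\|\ddot u\|_{L^2(\Omega)}^2\,ds\bigr)^{1/2}$; likewise $\bar\partial\rho^n=\tfrac1r\int_{t_{n-1}}^{t_n}\dot\rho(s)\,ds$ is bounded in $L^2$ by the average of $\|\dot\rho\|_{L^2(\Omega)}$, which Lemma \ref{proj} estimates by $C h^\eta k^{-\nu}\log k\,|\log(k/h)|^{1/2}\|\dot u\|_{H^\nu}$.

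For the $L^2$-estimate \eqref{h23} I would test with $\chi=\theta^n$, use the elementary inequality $(\bar\partial\theta^n,\theta^n)\ge\tfrac1{2r}\bigl(\|\theta^n\|^2_{L^2}-\|\theta^{n-1}\|^2_{L^2}\bigr)$, drop the coercive term $a(\theta^n,\theta^n)\ge c\|\theta^n\|_X^2\ge 0$, apply Cauchy--Schwarz and absorb, to get $\|\theta^n\|_{L^2}^2\le\|\theta^{n-1}\|_{L^2}^2+Cr\bigl(\|\bar\partial\rho^n\|_{L^2}^2+\|\omega^n\|_{L^2}^2\bigr)$; summing over $n$ (a discrete Gronwall, trivial here since the right side has no $\theta$) and using $\|\theta^0\|_{L^2}$ bounded as in \eqref{h17} gives the bound on $\theta^n$ in $L^2$. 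For the $X$-estimate \eqref{h24} I would instead test with $\chi=\bar\partial\theta^n$, use $a(\theta^n,\bar\partial\theta^n)\ge\tfrac1{2r}\bigl(a(\theta^n,\theta^n)-a(\theta^{n-1},\theta^{n-1})\bigr)$ together with the coercivity \eqref{h71} and boundedness \eqref{h70} of $a(\cdot,\cdot)$, and bound $\|\bar\partial\theta^n\|_{L^2}^2$ on the left against the right-hand side; summing over $n$ and using the bound \eqref{h88} on $\|\theta^0\|_X$ yields the claim. Replacing the time-sums $r\sum_n\|\bar\partial\rho^n\|_{L^2}^2\le\int_0^{t_n}\|\dot\rho\|_{L^2}^2\,ds$ and $r\sum_n\|\omega^n\|_{L^2}^2\le Cr^2\int_0^{t_n}\|\ddot u\|_{L^2}^2\,ds$ by integrals produces exactly the right-hand sides in \eqref{h23}--\eqref{h24}. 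The main obstacle, as usual, is handling the interface consistency terms correctly: one must verify that the $\sum_{\gamma}\int_\gamma\alpha\nabla u\cdot n[\![\chi]\!]$ contribution in \eqref{h13} is exactly matched by the corresponding term in the definition \eqref{h7} of $P_{h,k}$ so that it does not appear in the $\theta$-equation — everything else is a routine discrete energy argument combined with Lemma \ref{proj}.
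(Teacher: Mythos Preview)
Your proposal is correct and follows essentially the same route as the paper: the same $\rho^n+\theta^n$ splitting via the modified elliptic projection, the same error equation (the paper writes the right-hand side as $w^n=w_1^n+w_2^n=-\bar\partial\rho^n+(\bar\partial u(t_n)-\dot u(t_n))$), the test functions $\chi=\theta^n$ and $\chi=\bar\partial\theta^n$ for the $L^2$- and $X$-estimates respectively, and the same Taylor remainder for $\omega^n$. One small wording issue: you say you ``drop the coercive term'' and then ``absorb'', but in fact the paper (and you, implicitly) need the coercive term $c\|\theta^n\|_X^2$ precisely to absorb the $\|\theta^n\|_X$ coming from $(w^n,\theta^n)\le\|w^n\|_{L^2}\|\theta^n\|_X$ via Young's inequality---only after that absorption is the right-hand side free of $\theta^n$ and the telescoping sum truly Gronwall-free.
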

\begin{proof}
Setting
\begin{equation}
u(t_{n})-U^{n}=\underbrace{u(t_{n})-P_{h,k}u(t_{n})}+\underbrace{P_{h,k}u(t_{n})-U^{n}}=\rho^{n}+\theta^{n}.\nonumber
\end{equation}
We already know the estimates of $\rho^{n}$ from Lemma \ref{proj}, it is enough to estimate $\theta^{n}$ to deduce
the final results. Using \eqref{h13} and \eqref{h22}, we obtain
\begin{equation}\label{h36}
(\bar{\partial}\theta^{n},\chi)+a(\theta^{n},\chi)=(w^{n},\chi)\hspace{0.2cm}\forall \chi\in V_{h,k},
\end{equation}
where
\begin{equation}
w^{n}=\bar{\partial}P_{h,k}u(t_{n})-\dot{u}(t_{n})=-\bar{\partial}\rho^{n}+(\bar{\partial}u(t_{n})-\dot{u}(t_{n}))
=w^{n}_{1}+w^{n}_{2}.\nonumber
\end{equation}
Choosing $\chi=\theta^{n}$ in \eqref{h36}, using coercivity \eqref{h71} of $a(\cdot,\cdot)$, Cauchy-Schwarz
inequality and Young's inequality, we obtain
\begin{align}
\frac{1}{2}\bar{\partial}{||\theta^{n}||}^{2}_{L^{2}(\Omega)}+
c{||\theta^{n}||}^{2}_{X}
&\leq {||w^{n}||}_{L^{2}(\Omega)}{||\theta^{n}||}_{X}\nonumber\\
&\leq \frac{1}{2c}{||w^{n}||}^{2}_{L^{2}(\Omega)}+\frac{c}{2}{||\theta^{n}||}^{2}_{X}\nonumber
\end{align}
since
\begin{align}
(\bar{\partial}\theta^{n},\theta^{n})&=\frac{1}{2}\bar{\partial}{||\theta^{n}||}_{L^{2}(\Omega)}^{2}+
\frac{r}{2}{||\bar{\partial}\theta^{n}||}^{2}_{L^{2}(\Omega)}\nonumber\\
&\geq\frac{1}{2}\bar{\partial}{||\theta^{n}||}^{2}_{L^{2}(\Omega)}.\nonumber
\end{align}
Hence
\begin{equation}
\bar{\partial}{||\theta^{n}||}^{2}_{L^{2}(\Omega)}+{||\theta^{n}||}^{2}_{X}\leq C{||w^{n}||}^{2}_{L^{2}(\Omega)}.\nonumber
\end{equation}
From the definition of $\bar{\partial}$, we get
\begin{equation*}
{||\theta^{n}||}^{2}_{L^{2}(\Omega)}\leq{||\theta^{n-1}||}^{2}_{L^{2}(\Omega)}
+C~r{||w^{n}||}^{2}_{L^{2}(\Omega)}.
\end{equation*}
Repeating this, we arrive at
\begin{equation}\label{h38}
{||\theta^{n}||}^{2}_{L^{2}(\Omega)}\leq{||\theta^{0}||}^{2}_{L^{2}(\Omega)}+
C~r\left(\sum^{n}_{j=1}{||w^{j}_{1}||}^{2}_{L^{2}(\Omega)}+\sum^{n}_{j=1}{||w^{j}_{2}||}^{2}_{L^{2}(\Omega)}\right).
\end{equation}
Again with $u_{0,h,k}=P_{h,k}u_{0}$, $\theta^{0}=0$, otherwise we have $u_{0,h,k}=I^{h}_{k}u_{0}$ and
\begin{equation}\label{h39}
{||\theta^{0}||}_{L^{2}(\Omega)}\leq C~h^{\eta}k^{-\nu}\log k\bigg|\log\frac{k}{h}\bigg|^{1/2}\sum^{n_{0}}_{i=1}{||u_{0}||}_{H^{\nu}(\Omega_{i})}.
\end{equation}
Since
\begin{equation}
w^{j}_{1}=-\frac{(\rho(t_{j})-\rho(t_{j-1}))}{r}=-r^{-1}\int^{t_{j}}_{t_{j-1}}\dot{\rho}(s)ds\nonumber,
\end{equation}
from  Lemma \ref{proj}, we arrive at
\begin{align}\label{h41}
r\sum^{n}_{j=1}{||w^{j}_{1}||}^{2}_{L^{2}(\Omega)}&\leq\sum^{n}_{j=1}
\int^{t_{j}}_{t_{j-1}}{||\dot{\rho}(s)||}^{2}_{L^{2}(\Omega)}ds\nonumber\\
&\leq C~h^{2\eta}k^{-2\nu}(\log k)^{2}\bigg|\log\frac{k}{h}\bigg|\sum^{n_{0}}_{i=1}\int^{t_{n}}_{0}{||\dot{u}||}^{2}_{H^{\nu}(\Omega_{i})}ds.
\end{align}
In order to  estimate $w^{j}_{2}$, apply Taylors series expansion to get
\begin{align}
w^{j}_{2} =\bar{\partial}u(t_{j})-\dot{u}(t_{j})&=r^{-1}(u(t_{j})-u(t_{j-1}))-\dot{u}(t_{j})\nonumber\\
&=-r^{-1}\int^{t_{j}}_{t_{j-1}}(s-t_{j-1})\ddot{u}(s)ds,\nonumber
\end{align}
and hence
\begin{align}
r\sum^{n}_{j=1}{||w^{j}_{2}||}^{2}_{L^{2}(\Omega)}&\leq\sum^{n}_{j=1}\left(\int^{t_{j}}_{t_{j-1}}|s-t_{j-1}|
~{||\ddot{u}||}_{L^{2}(\Omega)}ds\right)^{2}\nonumber\\
&\leq C~r^{2}\int^{t_{n}}_{0}{||\ddot{u}||}^{2}_{L^{2}(\Omega)}ds.\label{h42}
\end{align}
Substituting \eqref{h39}, \eqref{h41} and \eqref{h42} in \eqref{h38}, we find
\begin{align}
{||\theta^{n}||}^{2}_{L^{2}(\Omega)}\leq C~\bigg[&h^{2\eta}k^{-2\nu}(\log k)^{2}\bigg|\log \frac{k}{h}\bigg|\sum^{n_{0}}_{i=1}
\bigg({||u_{0}||}^{2}_{H^{\nu}(\Omega_{i})}
+\int^{t_{n}}_{0}{||\dot{u}||}^{2}_{H^{\nu}(\Omega_{i})}ds\bigg)\nonumber\\
&\hspace{0.5cm}+r^{2}\int^{t_{n}}_{0}{||\ddot{u}||}^{2}_{L^{2}(\Omega)}ds\bigg].\nonumber
\end{align}
Finally,  with the help of  triangle  inequality, lemmas \ref{proj} and \eqref{h45}, we get \eqref{h23}.

In order to find an estimate in $X$-norm, substitute $\chi=\bar{\partial}\theta^{n}$ in \eqref{h36} and
proceed in a similar way to derive \eqref{h24}.
\end{proof}
\begin{remark}
Similarly, for the geometrically conforming partition of the domain, \eqref{h24} becomes
\begin{align*}
{||u(t_{n})-U^{n}||}^{2}_{X}&\leq C\bigg[h^{2(\eta-1)}k^{-2(\nu-1)}\log
 k\sum^{n_{0}}_{i=1}\bigg({||u_{0}||}^{2}_{H^{\nu}(\Omega_{i})}
+\int_{0}^{t_{n}}{||\dot{u}||}^{2}_{H^{\nu}(\Omega_{i})}ds\bigg)\nonumber\\
&\hspace{1cm}+r^{2}\int_{0}^{t_{n}}{||\ddot{u}||}^{2}_{L^{2}(\Omega)}ds\bigg].
\end{align*}
\end{remark}
\section{Numerical experiment}
Consider the problem \eqref{h1}-\eqref{h4}, with exact solution $u=xy(1-x^2)(1-y^2)e^t$ and initial value $u_0=xy(1-x^2)(1-y^2)$ and the coefficient $\alpha=(1,10,10)$. We consider the L-shaped domain $[-1,1]\times[-1,1]\setminus[0,1]\times[0,1]$ (see Figure \ref{fig1}). We conducted the experiment by taking time step parameter $k = O(h^2)$ corresponding to space discretization parameters $h = 1/6, 1/8, 1/10, 1/12, 1/14$. We plot the order of convergence `$p$' of ${||u-u_{h,1}||}_{L^{2}(\Omega)}$  with respect to space
parameter $h$ in the log-log scale, see Figure \ref{fig2}. The  order of convergence `$q$' with respect to time step parameter $r$ depicted in Figure \ref{fig3}. Since
the exact solution is smooth, the convergence rates of the error in $L^2$-norm are obtained as expected, i.e., $O(h^2)$ (with the linear finite elements) and $O(r)$, respectively. We show all computed values in Table \ref{tabbb1} below: 

\begin{table}[hbtp]
\small
\caption{Order of convergence of\hspace{0.2cm}${||u-u_{h,1}||}_{L^{2}(\Omega)}$.}
\centering
\begin{tabular}{lclcl}
\hline
$h$&r&${\|u-u_{h,1}\|}_{L^2(\Omega)}$&p&q\\
\hline
1/6&1/36&0.026451&\\
1/8&1/64& 0.016035& 1.739837756875778&0.869918878437889
\\
1/10&1/100&0.010766&  1.785311789922130&0.892655894961065\\
1/12&1/144&0.0077316&  1.815897138057364
& 0.907948569028682\\
1/14&1/196&0.0058236& 1.838442749983338&0.919221374991669
\\
\hline
\end{tabular}
\label{tabbb1}
\end{table}
\begin{figure}[h]
\includegraphics*[width=12cm,height=9cm]{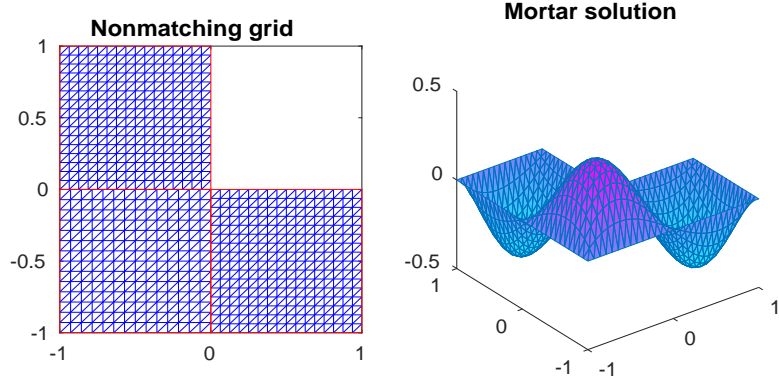}
\centering
\caption{L-shaped domain used with nonmatching grid (left) and a mortar solution (right).}
\label{fig1}
\end{figure}
\begin{figure}[h]
\includegraphics*[width=11cm,height=7cm]{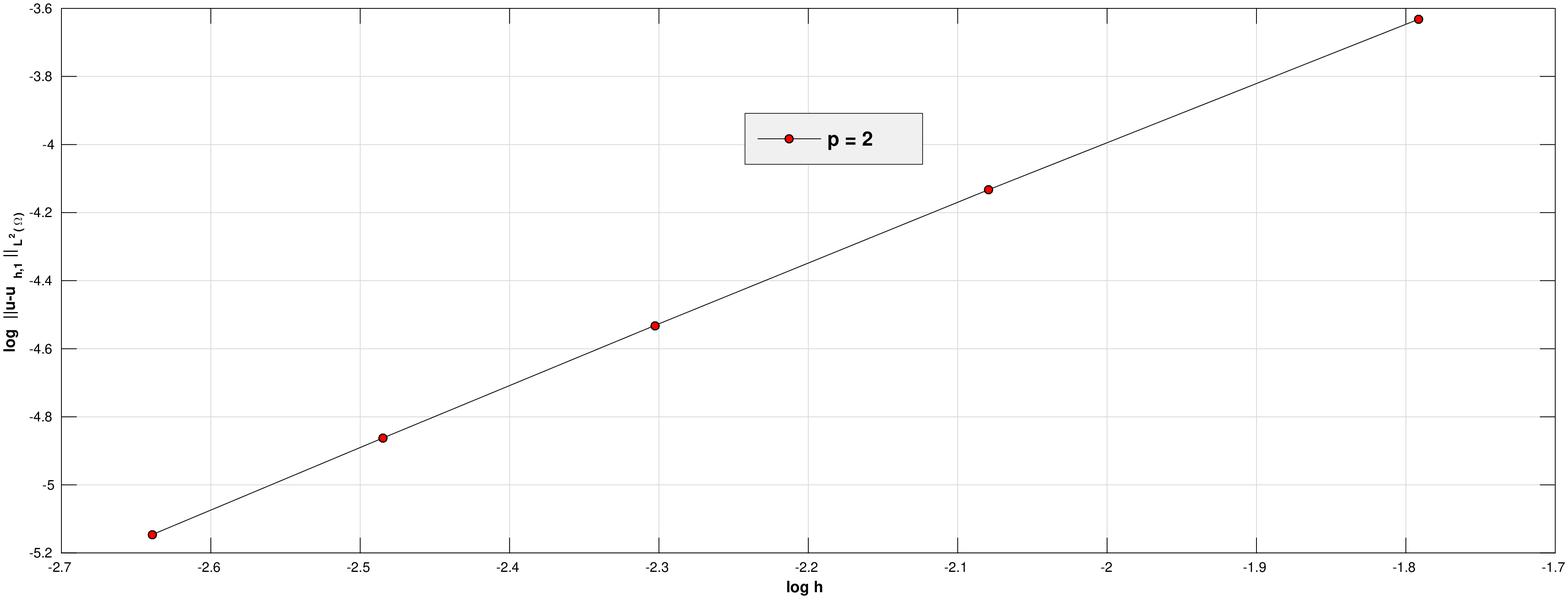}
\centering
\caption{Order of convergence with respect to mesh size $h$.}
\label{fig2}
\end{figure}
\begin{figure}[h]
\includegraphics*[width=11cm,height=7cm]{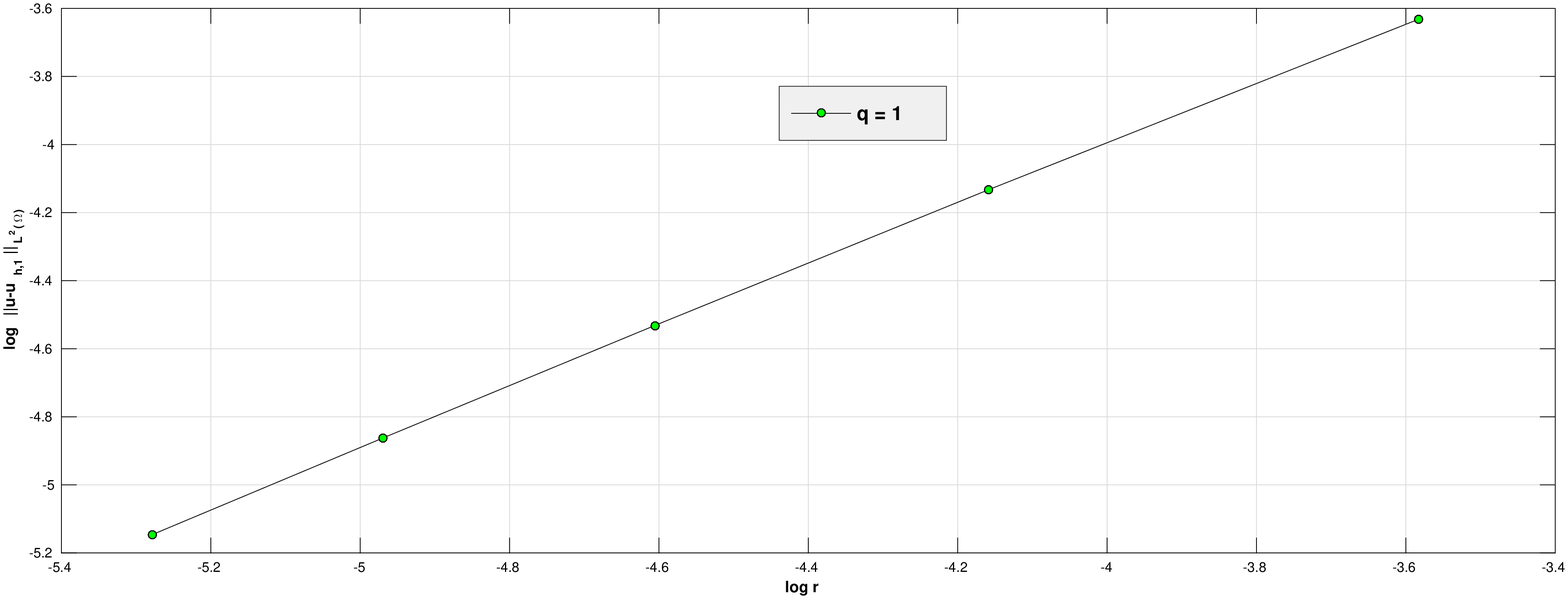}
\centering
\caption{Order of convergence with respect to time step  $r$.}
\label{fig3}
\end{figure}
\section{Conclusion}
We discussed the $hp$ version of the mortar finite element method for a parabolic initial-boundary value problem. Quasioptimal convergence results with a small pollution term $O(\log k)$ are obtained for both semidiscrete and fully discrete methods in both $H^1$- and $L^2$-norms.
With a more regularity assumption superconvergence estimates for the semidiscrete method has been derived in the negative norm.
The fully discrete scheme
is derived using a finite difference method in the temporal direction.
However,  we may derive  the fully
discrete scheme using  finite element methods in temporal direction (cf. \cite{babuska-p}).
Here we considered a problem with homogeneous Dirichlet boundary condition. For nonhomogeneous Dirichlet boundary
condition, we refer to \cite{nonhomogeneous}. 
Although we assumed our domain to be polygonal, one can extend the problem to a domain with curved boundary as in  \cite{curved}.
All the estimates are derived with the help of Sobolev space.
Same results can be expressed using Besov  and Jacobi-weighted Besov spaces (cf. \cite{Guo2m}).

\bibliographystyle{amsplain}

\end{document}